\newcommand\F{{\mathbb F}}
\newcommand\R{{\mathbb R}}
\newcommand\Z{{\mathbb Z}}
\newcommand\x{{\bf x}}
\newcommand\vb{{\bf v}}
\newcommand\wb{{\bf w}}
\newcommand\spa{{\rm span}}
\newcommand\pa{{\rm Part\,}}
\newcommand\fand{ \quad \textrm{and} \quad} 
\newcommand\ffor{ \quad \textrm{or} \quad}
\newcommand\OO{{\mathcal O}}
\newcommand\PP{{\mathcal P}}
\newcommand\Lp{L_{\mathcal P}}
\newcommand\Pb{{\bf P}}
\newcommand\Qb{{\bf Q}}
\newcommand\Ob{{\bf O}}
\newtheorem{theorem}{Theorem}[section]
\newtheorem{lemma}[theorem]{Lemma}
\newtheorem{proposition}[theorem]{Proposition}
\newtheorem{corollary}[theorem]{Corollary}
\theoremstyle{definition}
\theoremstyle{remark}
\numberwithin{equation}{section}
\begin{document}

\title[lattices from elliptic curves]{On the lattices from elliptic curves over finite fields}

%    Information for first author
\author{Min Sha}

%    Address of record for the research reported here
\address{School of Mathematics and Statistics, University of New South Wales,
 Sydney NSW 2052, Australia}
%    Current address
%\curraddr{Department of Mathematics and Statistics,
%Case Western Reserve University, Cleveland, Ohio 43403}
\email{shamin2010@gmail.com}

%    \thanks will become a 1st page footnote.
%\thanks{The first author was supported in part by NSF Grant \#000000.}

%    Information for second author

%    General info
\subjclass[2010]{11H06, 11H31, 11G20}

%\date{Januar1 1, 2001 and, in revised form, June 22, 2001.}

%\dedicatory{This paper is dedicated to our advisors.}

\keywords{Elliptic curve, lattice, minimal vector, basis, covering radius\\ School of Mathematics and Statistics, University of New South Wales,
 Sydney NSW 2052, Australia. Phone: (+61)-0411840234}

\begin{abstract}
In this paper, we continue the recent work of Fukshansky and Maharaj on  lattices from elliptic curves over finite fields.  We show that there exist bases formed by minimal vectors for these lattices except only one case. We also compute their determinants, and obtain sharp bounds for the covering radius.
\end{abstract}

\maketitle

\section{Introduction}

Let $L\subseteq \R^n$ be a lattice of rank $k\le n$, where $\R^n$ is the usual $n$-dimensional row vector space over $\R$. The \textit{minimal distance} of $L$ is defined as
$$
d(L)=\min\{\|\x\|:\x\in L, \x\ne 0\},
$$
where $\|~\|$ is the usual Euclidean norm in $\R^n$. A vector $\x \in L$ is called a \textit{minimal vector} of $L$ if $\|\x\|=d(L)$. Suppose that $\{\vb_1,\dots,\vb_k\}$ is a basis of $L$, then its \textit{generator matrix} is 
\begin{equation*}
B = \left(
\begin{array}{ccc}
\vb_1  \\
\vb_2 \\
\vdots \\
\vb_k
\end{array} \right).
\end{equation*}
The \textit{determinant} of $L$ is
$$
\det L=\sqrt{\det(BB^{T})},
$$
where $T$ stands for the transpose.

Let $V:=\spa_{\R} L$ be the $k$-dimensional subspace of $\R^n$ spanned by $L$. The proportion of $V$ filled by the balls centered at points of $L$ and of radius $d(L)/2$ is called the \textit{packing density} of $L$, denoted by $\Delta(L)$. That is,
$$
\Delta(L)=\frac{\pi^{k/2}d(L)^k}{2^k\Gamma(\frac{k}{2}+1)\det L },
$$
where $\Gamma$ is the gamma function.

Rosenbloom and Tsfasman \cite{Tsfasman1990} discovered a construction (\textit{function field lattices}) of lattices from algebraic curves over finite fields to construct the so-called asymptotically good families of lattices; see also \cite[pp. 578--583]{Tsfasman1991}. This kind of lattices was independently discovered by Noam Elkies (written in an unpublished note) and was also studied by Quebbemann \cite{Quebbemann}. We recall this construction briefly as follows.

Let $X$ be a smooth proper curve of genus $g$ over a finite field $\F_q$ of $q$ elements, and $K=\F_q(X)$. Fix a non-empty subset $\PP \subseteq X(\F_q)$, where $X(\F_q)$ is the set of $\F_q$-rational points of $X$, and let $\OO_\PP^*$ denote the set of non-zero rational functions $f\in K$ whose divisors have support contained in $\PP$.

Here, for any rational point $\Pb$ on $X$, we denote its corresponding divisor in non-bold font $P$. We fix this notation correspondence throughout the paper.

Suppose that $|\PP|=n$ and $\PP=\{\Pb_0,\Pb_1,\ldots,\Pb_{n-1}\}$. For each $\Pb_i\in \PP$, let $v_i$ denote the corresponding normalized discrete valuation.
 For every $f\in \OO_\PP^*$, the corresponding principal divisor is
 $$
 (f)=\sum_{i=0}^{n-1}v_i(f)P_i,
 $$
 and we define
$$
\deg f=\sum_{v_i(f)>0}v_i(f)=\frac{1}{2}\sum_{i=0}^{n-1}|v_i(f)|.
$$
 Define the homomorphism
\begin{equation}\label{lattice}
\phi_{\PP}: \OO_\PP^* \to \Z^n, \quad f \mapsto (v_0(f),v_1(f),\ldots,v_{n-1}(f)).
\end{equation}
We denote by $\Lp$ the image of $\phi_{\PP}$. Then, $\Lp$ is a finite-index sublattice of the root lattice
$$
A_{n-1}=\left\{\x=(x_0,x_1,\ldots,x_{n-1})\in \Z^n: \sum_{i=0}^{n-1}x_i=0 \right \}
$$
with rank $n-1$ and
\begin{align*}
& d(L_\PP)\ge \min\{\sqrt{2\deg f}: f\in \OO_\PP^*\setminus \F_q\}, \\
& \det L_\PP \le \sqrt{n}|J_X(\F_q)|\le \sqrt{n}\left(1+q+\frac{|X(\F_q)|-q-1}{g}\right)^g,
\end{align*}
where $J_X$ is the Jacobian of $X$.

In \cite{Lenny2014}, Fukshansky and Maharaj considered the case that $E:=X$ is an elliptic curve over $\F_q$ and $\PP=E(\F_q)$, they determined the minimal distance, the minimal vectors, and the number of minimal vectors (actually in \cite[Theorem 3.2]{Lenny2014}, the parameter $\epsilon$ should be the number of 2-torsion points of $E$ contained in  $E(\F_q)$); furthermore, they proved that $\Lp$ is generated by its minimal vectors when $|\PP|\ge 5$. We will recall their results briefly in Section \ref{preliminary}.

Now, one interesting question is that whether $\Lp$ has a basis of minimal vectors when $|\PP|\ge 5$. The main contribution of this paper is to confirm that $\Lp$ has such a basis. Here, we want to indicate that there indeed exist lattices generated by minimal vectors which have no basis of minimal vectors; see \cite{Conway1995}.

In this paper, we continue the research of \cite{Lenny2014}. First, we point out that if $\PP$ is only a subgroup of $E(\F_q)$, then all the results related to $\Lp$ in \cite{Lenny2014} still hold. Especially, when $\PP$ is a cyclic group and put $n=|\PP|$, $\Lp$ is exactly the so-called \textit{Barnes lattice} \cite{Barnes}, which is denoted by $B_{n-1}$ and characterized by the following equations:
\begin{equation} \label{eq:Barnes}
\sum_{i=0}^{n-1}x_i=0 \quad \textrm{and} \quad \sum_{i=0}^{n-1}ix_i \equiv 0 \mod n.
\end{equation}

Let $\PP$ be a subgroup of $E(\F_q)$. We mainly show that $\Lp$ has a basis formed by minimal vectors except that $\PP$ is a cyclic group and $|\PP|=4$. In addition, we  compute its determinant, and obtain sharp bounds for its covering radius.

Most recently, the authors in \cite{Bottcher} generalize the lattice $\Lp$ coming from elliptic curves over finite fields to a construction of lattices from finite Abelian groups, which includes the current case we study. By using different arguments from here, they show that these lattices have a basis of minimal vectors except that the group is cyclic and of order four. They also improve the upper bound of covering radius presented here in the case of the Barnes lattices. Especially, they investigate the automorphism groups of such lattices.

\section{Preliminaries}\label{preliminary}

We first fix some notation for the rest of the paper. Let $E$ be an elliptic curve defined over a finite field $\F_q$, the point at infinity is denoted by $\Ob$. Equipped with the group law, $E(\F_q)$ is an abelian group with the identity element $\Ob$. Let $\PP$ be a subgroup of $E(\F_q)$ with
$|\PP|=n$. As \eqref{lattice}, we define a lattice $\Lp$ with rank $n-1$.
Here, we always assume that $n\ge 2$.

Although Fukshansky and Maharaj \cite{Lenny2014} only considered the case $\PP=E(\F_q)$, their results are valid for any subgroup $\PP$ of $E(\F_q)$. Because  their arguments are based on two facts: $E$ is an elliptic curve, and $\PP$ is a subgroup of $E(\F_q)$. One key property of elliptic curves used repeatedly is as follows; see \cite[Chapter III, Corollary 3.5]{Silverman}.

\begin{theorem}\label{principal}
Let $D=\sum_{P\in E} n_P P$ be a divisor of $E$. Then, $D$ is a principal divisor if and only if $\sum_{P\in E} n_P=0$ and $\sum_{P\in E} n_P \Pb=\Ob$.
\end{theorem}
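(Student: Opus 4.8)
The plan is to prove this by identifying the degree-zero part $\mathrm{Pic}^0(E)$ of the divisor class group with $E$ under the geometric group law, via the classical summation map. The necessity of $\sum_P n_P = 0$ is immediate: any principal divisor on a smooth proper curve has degree $0$, since a nonconstant $f\in K^*$ defines a finite morphism $E\to\mathbb{P}^1$ with equally many zeros and poles. So throughout I may assume $\deg D = 0$, and I must show that $D$ is principal if and only if the group-law sum $\sum_P n_P\,\Pb$ equals $\Ob$.

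First I would introduce the map
\[
\kappa \colon E \longrightarrow \mathrm{Pic}^0(E), \qquad \Pb \longmapsto [P - O],
\]
sending a point to the class of the degree-zero divisor $P - O$, and prove it is a bijection. Surjectivity follows from Riemann--Roch: $E$ has genus $1$, so for any degree-zero divisor $D'$ the divisor $D' + O$ has degree $1$ and hence $\ell(D' + O) = 1$; a nonzero function $f$ in the associated space produces an effective divisor $(f) + D' + O$ of degree $1$, necessarily of the form $Q$ for a unique point $\Qb$, whence $D' \sim Q - O = \kappa(\Qb)$. Injectivity uses the genus again: if $\kappa(\Pb) = \kappa(\Qb)$ with $\Pb \ne \Qb$, then $P \sim Q$ would give a degree-one map $E \to \mathbb{P}^1$, forcing genus $0$, a contradiction.

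The heart of the argument --- and the step I expect to be the main obstacle --- is showing that $\kappa$ is a \emph{group homomorphism} from $(E,\oplus)$ with its chord-and-tangent law to $(\mathrm{Pic}^0(E),+)$. The key geometric input is that any line in $\mathbb{P}^2$ meets the Weierstrass cubic $E$ in a divisor linearly equivalent to $3O$: the line at infinity meets $E$ only at $\Ob$ to order three, so the hyperplane section class is $3O$, and all lines are linearly equivalent since their ratios are rational functions. Thus if $\Pb,\Qb,\mathbf{R}$ are collinear then $P + Q + R \sim 3O$. Writing the group law as ``the line through $\Pb,\Qb$ meets $E$ again at $\mathbf{R}$, and the line through $\mathbf{R},\Ob$ meets $E$ again at $\Pb\oplus\Qb$,'' I would combine $P + Q + R \sim 3O$ with $R + (P\oplus Q) + O \sim 3O$ to get $(P - O) + (Q - O) \sim (P\oplus Q) - O$, i.e. $\kappa(\Pb) + \kappa(\Qb) = \kappa(\Pb\oplus\Qb)$. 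Degenerate configurations (tangencies and coincidences with $\Ob$) are handled by the usual multiplicity and limiting conventions.

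Finally I would assemble the conclusion. For a degree-zero divisor $D = \sum_P n_P P$, the relation $\sum_P n_P = 0$ lets me subtract copies of $O$ freely, so that
\[
[D] = \sum_P n_P\,[P - O] = \sum_P n_P\,\kappa(\Pb) = \kappa\!\left(\sum_P n_P\,\Pb\right),
\]
where the last sum is the group-law sum on $E$ and the final equality uses that $\kappa$ is a homomorphism. Since $D$ is principal exactly when $[D] = 0 = \kappa(\Ob)$, and $\kappa$ is injective, this holds if and only if $\sum_P n_P\,\Pb = \Ob$, which completes the proof.
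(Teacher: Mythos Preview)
Your proof is correct and is precisely the classical argument via the isomorphism $E\cong\mathrm{Pic}^0(E)$ established through Riemann--Roch. However, the paper does not give its own proof of this statement at all: it simply cites it as a well-known fact from \cite[Chapter~III, Corollary~3.5]{Silverman} and uses it as a black box throughout. So there is nothing to compare against --- your outline is exactly the standard proof one finds in Silverman's book, and it is fine.
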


In the sequel, as in \cite{Lenny2014, Tsfasman1991}, we identify $\Z^n$ with the set of all divisors of $E$ with support in $\PP$. We will often make use of this identification when working with lattice vectors by working with the corresponding divisors instead.

For the convenience of the reader, we restate some results from \cite{Lenny2014} concerning the lattice $\Lp$.

\begin{lemma}[\cite{Lenny2014}]\label{Lenny1}
Suppose that $n\ge 4$, then the minimal distance of $\Lp$ is 2, and the minimal vectors of $\Lp$ are of the form $P+Q-R-S$, where $\Pb,\Qb,{\bf R},{\bf S}\in \PP$ are distinct and $\Pb+\Qb={\bf R}+{\bf S}$. If $n=2$, then the minimal distance of $\Lp$ is $2\sqrt{2}$, and the minimal vectors are of the form $\pm (2P-2O)$, where $\PP=\{ \Pb,\Ob \}$. If $n=3$, then the minimal distance of $\Lp$ is $\sqrt{6}$, and the   minimal vectors are of the form $\pm (P+Q-2O), \pm (P-2Q+O)$, and $\pm (-2P+Q+O)$, where $\PP=\{ \Pb,\Qb,\Ob \} $.
\end{lemma}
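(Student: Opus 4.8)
The plan is to translate everything into the language of divisors via Theorem \ref{principal}, compute the shortest possible Euclidean length directly, and then pin down which divisors realize it. Under the identification of $\Z^n$ with divisors supported on $\PP$, a vector $\x=(x_0,\dots,x_{n-1})$ lies in $\Lp$ precisely when $\sum_i x_i=0$ and $\sum_i x_i\Pb_i=\Ob$ in $E(\F_q)$. Since $x_i^2\equiv x_i\pmod 2$, every $\x\in\Lp$ satisfies $\sum_i x_i^2\equiv\sum_i x_i=0\pmod 2$, so a nonzero lattice vector has even squared length and $\|\x\|^2\in\{2,4,6,\dots\}$. This reduces the whole lemma to deciding, for each of $\|\x\|^2=2,4,6$, whether a lattice vector of that length exists and what shape it must take.

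First I would rule out length $\sqrt2$ in every case: a vector with $\|\x\|^2=2$ in $A_{n-1}$ is $\pm(e_i-e_j)$ with $i\neq j$, i.e.\ the divisor $P_i-P_j$, which by Theorem \ref{principal} is principal only if $\Pb_i=\Pb_j$, impossible for distinct points of $\PP$. Next, a vector with $\|\x\|^2=4$ and $\sum_i x_i=0$ must have exactly two entries $+1$ and two entries $-1$, because any entry of absolute value $\ge 2$ already contributes $4$ to the squared length and forces a second such entry, giving $\|\x\|^2\ge 8$. Hence it is of the form $P+Q-R-S$ with $\Pb,\Qb,{\bf R},{\bf S}$ distinct, and the second condition of Theorem \ref{principal} becomes exactly $\Pb+\Qb={\bf R}+{\bf S}$.

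For $n\ge 4$ it remains to produce such a configuration. Here I would count unordered pairs of distinct points of $\PP$: there are $\binom n2$ of them and only $n$ possible sums, and $\binom n2>n$ once $n\ge 4$, so by pigeonhole two distinct pairs $\{\Pb,\Qb\}\neq\{{\bf R},{\bf S}\}$ share a sum. The key observation is that two distinct pairs with equal sum are automatically disjoint, since a common point would force the two remaining points to coincide as well, making the pairs equal. Thus $\Pb,\Qb,{\bf R},{\bf S}$ are four distinct points with $\Pb+\Qb={\bf R}+{\bf S}$, yielding a lattice vector of length $2$; combined with the previous paragraph this gives $d(\Lp)=2$ and the stated description of minimal vectors. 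The cases $n=2,3$ I would then finish by hand, since length $4$ is unavailable there. For $n=2$, writing $\PP=\{\Pb,\Ob\}$ with $\Pb$ of order $2$, the conditions force $\x=(m,-m)$ with $m\Pb=\Ob$, i.e.\ $m$ even, so the shortest nonzero vectors are $\pm(2P-2O)$ of length $2\sqrt2$. For $n=3$, $\PP=\{\Ob,\Pb,\Qb\}$ is cyclic with $\Qb=2\Pb$, and the group condition reduces to the single congruence $x_1\equiv x_2\pmod 3$; since $\|\x\|^2=4$ needs four coordinates, the minimum squared length is $6$, attained exactly by the six vectors $\pm(P+Q-2O)$, $\pm(P-2Q+O)$, $\pm(-2P+Q+O)$, which one checks all satisfy the congruence.

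The step I expect to require the most care is the existence argument for $n\ge 4$: confirming that the only length-$2$ shape is $P+Q-R-S$ and, above all, the pigeonhole count together with the disjointness observation that upgrades ``distinct pairs'' to ``four distinct points''. The remaining verifications are routine once the divisor dictionary of Theorem \ref{principal} and the parity constraint on $\|\x\|^2$ are in place.
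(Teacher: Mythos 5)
The paper itself offers no proof of this lemma: it is quoted from \cite{Lenny2014} in Section~\ref{preliminary} for the reader's convenience, so there is no in-paper argument to compare against. Your proof is correct and self-contained, and it takes a more elementary route than the source, which obtains the lower bound on $d(\Lp)$ via $\sqrt{2\deg f}$ for $f\in\OO_\PP^*\setminus\F_q$ and rules out degree-one functions by geometric considerations on $E$; you replace this with the direct observation, via Theorem~\ref{principal}, that $P_i-P_j$ is never principal for distinct points, combined with the parity constraint $\|\x\|^2\equiv\sum_i x_i\equiv 0\pmod 2$. The pigeonhole count $\binom{n}{2}>n$ for $n\ge 4$, together with the remark that two distinct equal-sum pairs must be disjoint, is exactly the right existence argument, and the $n=2,3$ computations check out against the stated lists. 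One local slip should be repaired: to exclude an entry of absolute value $\ge 2$ from a vector of squared length $4$, you assert that such an entry forces a second entry of absolute value $\ge 2$ and hence $\|\x\|^2\ge 8$; that is false as stated (the vector $(2,-1,-1,0,\ldots,0)$ has only one such entry and squared length $6$). The conclusion you need survives anyway: the zero-sum condition forces at least one further nonzero coordinate, so $\|\x\|^2\ge 5$, hence $\ge 6$ by parity, which still exceeds $4$. With that justification corrected, the argument is complete.
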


\begin{theorem}[\cite{Lenny2014}]\label{Lenny2}
If $n\ge 5$, then the lattice $\Lp$ is generated by its minimal vectors.
\end{theorem}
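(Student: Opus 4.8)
The plan is to use the explicit description of $\Lp$ coming from Theorem \ref{principal}. Identifying a vector $\x=(x_0,\dots,x_{n-1})$ with the divisor $D=\sum_i x_iP_i$, membership $\x\in\Lp$ is equivalent to $\sum_i x_i=0$ together with $\sum_i x_i\Pb_i=\Ob$ in $E(\F_q)$. I will run an induction on the degree $\deg D=\tfrac12\sum_i|x_i|$. Let $M\subseteq\Lp$ be the sublattice generated by the minimal vectors, which by Lemma \ref{Lenny1} (since $n\ge 5\ge 4$) are exactly the $\pm(P+Q-R-S)$ with $\Pb,\Qb,\mathbf{R},\mathbf{S}$ distinct and $\Pb+\Qb=\mathbf{R}+\mathbf{S}$. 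The goal is $M=\Lp$, and I would separate this into two steps.

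The first, easier step is to show that $\Lp$ is generated by the (possibly degenerate) degree-two vectors $P+Q-R-S$ subject only to $\Pb+\Qb=\mathbf{R}+\mathbf{S}$, now allowing coincidences among the four points. I would prove this by induction on $\deg D$. A nonzero element of $\Lp$ has $\deg D\ge 2$, because $\deg D=1$ would force $D=P-R$ with $\Pb=\mathbf{R}$, a point lying in both the positive and negative support, which is impossible. For $\deg D\ge 2$, choose two points $\Pb_1,\Pb_2$ in the positive support (counted with multiplicity) and one point $\mathbf{R}$ in the negative support, and set $\mathbf{S}=\Pb_1+\Pb_2-\mathbf{R}\in\PP$. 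Then $v=P_1+P_2-R-S$ lies in $\Lp$, and subtracting it strictly lowers $\deg D$: the two positive slots and the slot at $\mathbf{R}$ all move toward zero, while only the coefficient at $\mathbf{S}$ may move away, so a short inspection of the few coincidence cases shows the $\ell_1$-norm drops by at least $2$. Iterating terminates and writes $D$ as a $\Z$-combination of such degree-two vectors.

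The second step is to place each degree-two generator in $M$. The nondegenerate ones (four distinct points) are minimal vectors. Up to sign, the degenerate ones have two shapes: $2P-R-S$ with $2\Pb=\mathbf{R}+\mathbf{S}$ and $\mathbf{R}\ne\mathbf{S}$, and $2P-2R$ with $2\Pb=2\mathbf{R}$. In each case I would introduce an auxiliary point $\mathbf{A}\in\PP$ and split the vector into two minimal vectors; for instance
$$2P-R-S=(P+A-R-B)+(P+B-S-A),\qquad \mathbf{B}=\Pb+\mathbf{A}-\mathbf{R},$$
which satisfies both sum conditions automatically. This succeeds as soon as $\mathbf{A}$ can be chosen so that each of the two summands consists of four genuinely distinct points.

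The main obstacle is precisely this choice, and it is where $n\ge 5$ is used. The distinctness constraints forbid at most four values of $\mathbf{A}$ (namely $\Pb$, $\mathbf{R}$, $2\mathbf{R}-\Pb$, $2\Pb-\mathbf{R}$ in the case $2P-R-S$), so a valid $\mathbf{A}\in\PP$ exists whenever $|\PP|=n\ge 5$; the case $2P-2R$ is easier and needs only $n\ge 3$. Combining the two steps then gives $M=\Lp$. I would emphasize that this count is sharp: in the cyclic group of order $4$ the four forbidden values can exhaust $\PP$, so a vector such as $2P-R-S$ need not be a sum of minimal vectors there, which matches the necessity of $n\ge 5$ and the exceptional status of the cyclic order-four case in the main theorem.
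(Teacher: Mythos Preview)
The paper does not prove this theorem at all; it is simply quoted as a result of Fukshansky and Maharaj \cite{Lenny2014} and used as background for the paper's own contribution (constructing \emph{bases} of minimal vectors). So there is no ``paper's own proof'' to compare your proposal against.

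That said, your outline is a correct and natural argument, and almost certainly close in spirit to the original proof. A few comments:

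\begin{itemize}
\item Your Step~1 is clean. The verification that the $\ell_1$-norm drops by at least $2$ goes through in all coincidence cases: one checks that $\mathbf{S}$ can never equal $\Pb_1$ or $\Pb_2$ (that would force $\Pb_2=\mathbf{R}$ or $\Pb_1=\mathbf{R}$, impossible since they lie in opposite supports), and that when $\mathbf{S}=\mathbf{R}$ or $\Pb_1=\Pb_2$ the count still yields a drop of at least $2$. You assert this without spelling it out; in a written proof you should display the short case check.
\item In Step~2 your enumeration of forbidden values for $\mathbf{A}$ in the case $2P-R-S$ is exactly right: $\{\Pb,\mathbf{R},2\mathbf{R}-\Pb,2\Pb-\mathbf{R}\}$, so $n\ge 5$ suffices. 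For $2P-2R$ with $2\Pb=2\mathbf{R}$ one has $2\mathbf{R}-\Pb=\Pb$, so only $\{\Pb,\mathbf{R}\}$ are forbidden and $n\ge 3$ suffices, as you say. Don't forget to note explicitly that the symmetric shape $P+Q-2R$ is handled by negation.
\item Your closing remark that the four forbidden values can exhaust a cyclic group of order $4$ (e.g.\ $\Pb=1$, $\mathbf{R}=0$ in $\Z/4\Z$ gives forbidden set $\{0,1,2,3\}$) is a nice sanity check and dovetails with Lemma~\ref{234}.
\end{itemize}

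In short: correct approach, nothing to fix beyond writing out the brief case analysis in Step~1.
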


Now, we present some simple arguments on the cases $n=2,3,4$, which can make Theorem  \ref{Lenny2} more complete and will be used afterwards.

\begin{lemma}\label{234}
If $n=2,3$, $\Lp$ is generated by its minimal vectors. If $n=4$ and $\PP$ is a non-cyclic group, then $\Lp$ is also generated by its minimal vectors. If $n=4$ and $\PP$ is a cyclic group, then $\Lp$ is not generated by its minimal vectors.
\end{lemma}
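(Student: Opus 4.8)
The plan is to handle the four cases separately while anchoring all of them to a single invariant, the covolume of $\Lp$. By Theorem~\ref{principal}, and since every vector of $\Lp$ has coordinate sum zero, one has
\[
\Lp=\Big\{\x=(x_0,\dots,x_{n-1})\in A_{n-1}:\ \sum_{i=0}^{n-1}x_i\Pb_i=\Ob\Big\}.
\]
I would first observe that $\psi\colon A_{n-1}\to\PP$, $\x\mapsto\sum_i x_i\Pb_i$, is a group homomorphism with kernel exactly $\Lp$, and that it is surjective because the values $\psi(e_0-e_i)=\Pb_0-\Pb_i$ already run over $\Pb_0-\PP=\PP$. Hence $[A_{n-1}:\Lp]=|\PP|=n$, and together with $\det A_{n-1}=\sqrt n$ this yields $\det\Lp=n\sqrt n$. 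For the three positive cases this reduces the problem to the numerical check that a suitable family of minimal vectors spans a full-rank sublattice of the same determinant $n\sqrt n$, which then forces equality with $\Lp$.

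For $n=2$, write $\PP=\{\Ob,\Pb\}$ with $2\Pb=\Ob$; a vector $(x_0,-x_0)\in\Lp$ satisfies $x_0\Pb=\Ob$, so $x_0$ is even and $\Lp=\Z\cdot(2P-2O)$, whose generator is exactly the minimal vector of Lemma~\ref{Lenny1}. For $n=3$ I would take the two minimal vectors $u_1=P-2Q+O$ and $u_2=-2P+Q+O$ from Lemma~\ref{Lenny1}; they are linearly independent, their Gram matrix has determinant $27$, and so the lattice they generate has determinant $3\sqrt3=\det\Lp$ and equals $\Lp$.

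For $n=4$ with $\PP$ non-cyclic we have $\PP\cong\Z/2\times\Z/2$, say $\PP=\{\Ob,\Pb,\Qb,\Pb+\Qb\}$ with every nonzero element $2$-torsion. In this situation all three ways of splitting $\{\Pb_0,\Pb_1,\Pb_2,\Pb_3\}$ into two pairs satisfy the balance condition $\Pb_i+\Pb_j=\Pb_k+\Pb_l$ of Lemma~\ref{Lenny1}, so
\[
\wb_1=P_0+P_1-P_2-P_3,\quad \wb_2=P_0+P_2-P_1-P_3,\quad \wb_3=P_0+P_3-P_1-P_2
\]
are all minimal vectors. The point is that they are pairwise orthogonal of common squared length $4$, so their Gram matrix is $4I_3$ and the generated lattice has determinant $8=\det\Lp$; thus $\{\wb_1,\wb_2,\wb_3\}$ is in fact an orthogonal basis of $\Lp$ consisting of minimal vectors.

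The decisive, and I expect the most delicate, case is $n=4$ with $\PP\cong\Z/4$, where the statement turns negative. Writing $\Pb_i=i\Pb$, the balance condition of Lemma~\ref{Lenny1} reads $i+j\equiv k+l\pmod4$ for a partition of $\{0,1,2,3\}$ into two pairs; since $0+1+2+3\equiv2\pmod4$, the common pair-sum must be odd, and a short enumeration leaves only the partitions $\{0,1\}\,|\,\{2,3\}$ and $\{0,3\}\,|\,\{1,2\}$. So up to sign there are just two minimal vectors, $(1,1,-1,-1)$ and $(1,-1,-1,1)$, and these span a subspace of dimension $2<3=\operatorname{rank}\Lp$. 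Consequently the minimal vectors cannot generate $\Lp$. The main obstacle is precisely this enumeration: one must verify that no further admissible pairing exists, so that the minimal vectors are genuinely trapped in a rank-$2$ subspace; the remaining work is routine bookkeeping with Lemma~\ref{Lenny1} and the determinant established above.
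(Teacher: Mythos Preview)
Your argument is correct in all four cases. The route, however, differs from the paper's. For $n=2,3$ and non-cyclic $n=4$, the paper does not compute $\det\Lp$ at this stage; instead it takes an arbitrary element of $\Lp$, parametrizes its coordinates via Theorem~\ref{principal}, and writes it explicitly as an integer combination of the chosen minimal vectors. Your approach---first establishing $[A_{n-1}:\Lp]=n$ and hence $\det\Lp=n^{3/2}$, then matching Gram determinants---is the alternative the paper itself mentions after Theorem~\ref{basis} and employs in Theorem~\ref{basis5}. What you gain is economy: the Gram computations are short, the non-cyclic $n=4$ case becomes a one-line orthogonality observation, and you in fact prove the stronger statement that a \emph{basis} of minimal vectors exists in these small cases. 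What the paper's direct decomposition buys is self-containment at this point in the exposition (the determinant formula only appears as Proposition~\ref{det}) and an explicit recipe for expressing any lattice vector in terms of the minimal ones. The cyclic $n=4$ case is handled identically in both: enumerate the admissible pairings and observe that the minimal vectors span only a rank-$2$ subspace.
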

\begin{proof}
For $n=2$, $\PP=\{\Pb,\Ob\}$ where $2\Pb=\Ob$. By Theorem \ref{principal}, it is easy to see that $\Lp=\{2kP-2kO:k\in\Z\}$. So, it is easy to see that $\Lp$ can be generated by minimal vectors.

For $n=3$, $\mathcal{P}=\{\Pb,\Qb,\Ob\}$ where $\Qb=2\Pb$. By Theorem \ref{principal},  every $aP+bQ+cO\in L_\mathcal{P}$ has a form that $a=-2b-3s, c=b+3s$ for some integers $s$. Then
$$
aP+bQ+cO=(b+s)(-2P+Q+O)-s(P+Q-2O).
$$
So by Lemma \ref{Lenny1}, $L_\mathcal{P}$ can also be generated by minimal vectors.

If $n=4$ and $\mathcal{P}$ is non-cyclic, then we let $\mathcal{P}=\{\Pb_1,\Pb_2,\Pb_3,\Ob\}$, where $\Pb_1+\Pb_2=\Pb_3$ and $2\Pb_i=\Ob$ for $i=1,2,3$. Note that every $aP_1+bP_2+cP_3+dO\in L_\mathcal{P}$ has a form that $a=2r+1,b=2s+1,c=2t+1,d=-2r-2s-2t-3$, or $a=2r,b=2s,c=2t,d=-2r-2s-2t$,  for some integers $r,s,t$. Then, we have
\begin{align*}
& aP_1+bP_2+cP_3+dO\\
& =(r+s+1)(P_1+P_2-P_3-O)+(r+t+1)(P_1+P_3-P_2-O)\\
& \qquad \qquad +(s+t+1)(P_2+P_3-P_1-O),
\end{align*}
or
\begin{align*}
& aP_1+bP_2+cP_3+dO\\
& =(r+s)(P_1+P_2-P_3-O)+(r+t)(P_1+P_3-P_2-O)\\
& \qquad \qquad +(s+t)(P_2+P_3-P_1-O).
\end{align*}
Thus by Lemma \ref{Lenny1}, $L_\mathcal{P}$ can be generated by minimal vectors.

Finally, assume that $n=4$ and $\PP$ is a cyclic group, and then let $\mathcal{P}=\{\Pb_1,\Pb_2=2\Pb_1,\Pb_3=3\Pb_1,\Ob\}$. In view of the form of minimal vectors given in Lemma \ref{Lenny1}, it is easy to see that the minimal vectors of $\Lp$ are $\pm(\Pb_1+\Pb_2-\Pb_3-\Ob)$ and $\pm(\Pb_1-\Pb_2-\Pb_3+\Ob)$, whose rank is 2. However, the rank of $\Lp$ is 3. So, $\Lp$ cannot be generated by minimal vectors.
\end{proof}

\section{Another viewpoint of $\Lp$}\label{special}

In this section, we will see that actually $\Lp$ is a generalisation of the Barnes lattice.

Now, we assume that $\PP$ is a cyclic group, and let
$$
\PP=\{\Pb_0,\Pb_1,\ldots,\Pb_{n-1}\},
$$
where $\Pb_i=i\Pb$, $\Pb\in E(\F_q)$ is of order $n$. Note that $\Pb_0=\Ob$. Then, by \eqref{lattice} and Theorem \ref{principal}, 
we see that a vector $(x_0,x_1,\ldots,x_{n-1})\in \Lp$ if and only if
\begin{equation}\label{Barnes0}
\sum_{i=0}^{n-1}x_i=0 \quad \textrm{and} \quad \sum_{i=0}^{n-1}ix_i \equiv 0 \mod n,
\end{equation}
where the coordinate $x_i$ corresponds to the point $\Pb_i$ for $0\le i \le n-1$. So, $\Lp$ is exactly the Barnes lattice $B_{n-1}$, which has been defined in \eqref{eq:Barnes}; see \cite[Section 5.3]{Martinet} for a summary of its properties. Since $\PP \cong \Z/n\Z$, we can view $\Lp$ as a lattice associated to the group $\Z/n\Z$. In general, according to the group structure of $E(\F_q)$ (see Theorem \ref{Ruck}), the lattice $\Lp$ is actually associated to some group $\Z/m\Z \times \Z/n\Z$. Moreover, the authors in \cite{Bottcher} generalize this construction to finite Abelian groups.

When $\PP$ is a cyclic group, in \cite[Proposition 5.3.5]{Martinet} Martinet gave a basis of $\Lp$ formed by minimal vectors when $n=|\PP|\ge 7$; however this basis differs by the parity of $n$. Here, we will give a new such basis for $n\ge 5$ in a uniform way (see Theorem \ref{basis}), and in the next section we will generalize it to more general cases.

We first recall a basic lemma, which will be used repeatedly; see \cite[page 13, Corollary 2]{Cassels}, or \cite[page 20, Corollary]{Gruber}.
\begin{lemma}\label{Cassels}
Let $L$ be a lattice of rank $n$, and let $\vb_1,\ldots,\vb_{m}$ be linearly independent vectors of $L$. Then, there exists a basis $\{\wb_1,\ldots,\wb_n\}$ of $L$  such that
$$
\vb_i=a_{i1}\wb_1+\cdots+a_{ii}\wb_i
$$
with integers $0\le a_{ij}<a_{ii}$ for $1\le j<i$ and $i=1,2,\ldots,m$.
\end{lemma}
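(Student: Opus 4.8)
The plan is to argue by induction on $m$, after isolating the single nontrivial ingredient: in any lattice a \emph{primitive} vector extends to a basis. Concretely, any $\vb\in L\setminus\{0\}$ can be written uniquely as $\vb=a\wb$ with $a\ge 1$ an integer and $\wb$ primitive (not a proper integer multiple of another lattice vector), and such a $\wb$ can be completed to a basis of $L$. This rests on the observation that $L/\Z\wb$ is torsion-free, hence a free $\Z$-module, so that lifting a basis of the quotient and adjoining $\wb$ yields a basis of $L$. I would take this as known and build the whole statement on top of it.

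For $m=1$ I would simply set $\vb_1=a_{11}\wb_1$ with $\wb_1$ primitive and $a_{11}\ge 1$, and complete $\wb_1$ to a basis $\{\wb_1,\dots,\wb_n\}$; there is no off-diagonal condition to impose. Assume now that a basis has already been produced for $\vb_1,\dots,\vb_{m-1}$, so that $\vb_i=\sum_{j\le i}a_{ij}\wb_j$ with $a_{ii}\ge 1$ and $0\le a_{ij}<a_{ii}$ for $j<i$. Since $\vb_1,\dots,\vb_m$ are linearly independent and the inductive expansions give $\spa_\R(\wb_1,\dots,\wb_{m-1})=\spa_\R(\vb_1,\dots,\vb_{m-1})$, the component of $\vb_m$ along the remaining basis vectors is nonzero: writing $\vb_m=\sum_{k=1}^n b_k\wb_k$, the tail $\vb_m'=\sum_{k\ge m}b_k\wb_k$ is a nonzero vector of the lattice $L'=\spa_\Z(\wb_m,\dots,\wb_n)$.

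I would then apply the primitive-vector fact inside $L'$: let $\wb_m'$ be primitive with $\vb_m'=a_{mm}\wb_m'$, $a_{mm}\ge 1$, and extend it to a basis $\wb_m',\dots,\wb_n'$ of $L'$. Leaving $\wb_1,\dots,\wb_{m-1}$ untouched, the list $\{\wb_1,\dots,\wb_{m-1},\wb_m',\dots,\wb_n'\}$ is again a basis of $L$; it reproduces the expansions of $\vb_1,\dots,\vb_{m-1}$ verbatim, since those involve only the unchanged vectors, and it gives $\vb_m=\sum_{k<m}b_k\wb_k+a_{mm}\wb_m'$. Finally, replacing $\wb_m'$ by $\wb_m'+c\,\wb_k$ turns $b_k$ into $b_k-c\,a_{mm}$ while fixing the pivot $a_{mm}$, the other coefficients of $\vb_m$, and all of $\vb_1,\dots,\vb_{m-1}$; taking $c=\lfloor b_k/a_{mm}\rfloor$ successively for $k=1,\dots,m-1$ forces $0\le b_k<a_{mm}$ and closes the induction.

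The point that needs the most care is that every modification in the inductive step is confined to the one newly introduced vector $\wb_m'$, so that neither completing a primitive vector to a basis nor reducing the coefficients can disturb the triangular, reduced structure already fixed for $\vb_1,\dots,\vb_{m-1}$. Apart from that bookkeeping, the only genuine obstacle is the completion of a primitive vector to a basis of $L$; once that is in hand, the rest is a sequence of unimodular changes of basis.
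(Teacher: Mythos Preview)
Your argument is correct. The paper does not actually prove this lemma; it simply records it as a standard fact from the geometry of numbers, with citations to Cassels and to Gruber--Lekkerkerker. So there is no ``paper's own proof'' to compare against beyond the classical one in those references.

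That said, your inductive argument is precisely the standard proof one finds there: peel off a primitive vector from the residual lattice $L'=\spa_\Z(\wb_m,\dots,\wb_n)$ to create the new pivot, complete it to a basis of $L'$, and then perform the column operations $\wb_m'\mapsto \wb_m'+c\,\wb_k$ to reduce the off-diagonal entries modulo the pivot. Your bookkeeping is sound: the modifications touch only the newly introduced $\wb_m'$, so the triangular reduced form already achieved for $\vb_1,\dots,\vb_{m-1}$ survives intact. The one place worth a sentence of emphasis is the claim that $L/\Z\wb$ is torsion-free precisely when $\wb$ is primitive (this is what makes the quotient free and hence allows the completion to a basis); you gesture at it, and it is indeed the only nontrivial input.
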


\begin{theorem}\label{basis}
Assume that $\PP$ is a cyclic subgroup of $E(\F_q)$ and $n=|\PP|\ge 5$. Define the following vectors in $\Z^n$
\begin{equation}\label{basis1}
\left\{\begin{array}{ll}
  \vb_1=(1,1,-1,0,0,\ldots,0,-1),\\
  \vb_2=(1,0,1,-1,0,\ldots,0,-1),\\
  \vdots \\
  \vb_{n-3}=(1,0,0,\ldots,0, 1, -1, -1),\\
  \vb_{n-2}=(-1,1,0,\ldots,0,1,-1,0),\\
   \vb_{n-1}=(-1,1,0,\ldots,0,0,1,-1).
\end{array}\right.
\end{equation}
Then, $\{\vb_1,\vb_2,\ldots,\vb_{n-1}\}$ is a basis of $\Lp$.
\end{theorem}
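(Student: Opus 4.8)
The plan is to verify the statement in two stages: first show that each $\vb_i$ lies in $\Lp$, and second show that they form a basis. For membership, recall from \eqref{Barnes0} that a vector $\x=(x_0,\ldots,x_{n-1})\in\Z^n$ belongs to $\Lp=B_{n-1}$ if and only if $\sum_i x_i=0$ and $\sum_i i x_i\equiv 0\pmod n$. Each vector in \eqref{basis1} manifestly has coordinate sum zero (each has two entries $+1$ and two entries $-1$, with the rest zero), so the first condition is automatic. For the second condition I would simply compute the weighted sum $\sum_i i x_i$ for the three distinct shapes: for $\vb_j$ with $1\le j\le n-3$ the nonzero entries sit at positions $0,1,j+1,n-1$ with signs $+,+,-,-$, giving $0+1-(j+1)-(n-1)=-(j+1)\equiv 0$ is false, so I would recompute carefully—the point is that each weighted sum is a small explicit integer and I must check it is divisible by $n$. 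This is a routine but necessary finite calculation for the three families $\vb_1,\ldots,\vb_{n-3}$, $\vb_{n-2}$, and $\vb_{n-1}$.

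Assuming membership holds, the core of the proof is to show $\{\vb_1,\ldots,\vb_{n-1}\}$ generates all of $\Lp$. Since there are exactly $n-1$ vectors and $\Lp$ has rank $n-1$, it suffices to show they are linearly independent over $\Q$ and that the sublattice they generate is not proper, i.e.\ has index one. The cleanest route is to invoke Lemma \ref{Cassels}: if the $\vb_i$ are linearly independent, there is a basis $\{\wb_1,\ldots,\wb_{n-1}\}$ of $\Lp$ in which each $\vb_i$ is an upper-triangular integer combination with diagonal entries $a_{ii}$, and the index of the sublattice generated by the $\vb_i$ equals $\prod_i a_{ii}$. Thus the whole problem reduces to showing $\prod_i a_{ii}=1$, equivalently that the determinant of the matrix whose rows are the $\vb_i$ (after projecting out one redundant coordinate to get a square $(n-1)\times(n-1)$ matrix, using $\sum_i x_i=0$) has absolute value equal to $\det\Lp$ expressed in the same coordinates. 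Concretely, I would drop one coordinate (say the last) and compute the $(n-1)\times(n-1)$ determinant of the resulting integer matrix, then compare with the known covolume of $B_{n-1}$ inside $A_{n-1}$; matching these forces index one.

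Linear independence and the index-one computation both hinge on evaluating one structured determinant, and that is where I expect the real work to be. The matrix has a nearly bidiagonal pattern perturbed by the two special rows $\vb_{n-2},\vb_{n-1}$ and by the constant columns coming from the $+1$ in position $0$. The natural strategy is row/column reduction exploiting the sparse banded structure: subtract successive rows to telescope the $+1,-1$ pattern of the first $n-3$ vectors into a clean triangular block, then expand along the remaining two rows. I would aim to show directly that this reduces to a $\pm1$ (or small unit) determinant, which simultaneously gives linear independence (nonzero determinant) and index one (the value matching the covolume). The main obstacle is organizing this elimination cleanly enough that the off-pattern entries in $\vb_{n-2}$ and $\vb_{n-1}$ do not obstruct the telescoping; handling those two rows, and confirming the normalization against $\det B_{n-1}$, is the delicate part, whereas the membership check in the first stage is purely mechanical.
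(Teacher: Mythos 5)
Your overall strategy is genuinely different from the paper's. The paper also invokes Lemma \ref{Cassels}, but instead of reducing to a determinant it argues coordinate by coordinate: writing $\vb_i=a_{i1}\wb_1+\cdots+a_{ii}\wb_i$, it exploits the sparse banded shape of the $\vb_i$ to force $a_{ii}=1$ (hence $\vb_i=\wb_i$) one index at a time, and for the last vector it additionally uses the congruence $\sum_i i x_i\equiv 0\pmod n$ from Theorem \ref{principal} to rule out the bad cases. Your route --- show the $\vb_i$ span a sublattice of the correct covolume by evaluating one structured determinant --- is explicitly acknowledged in the paper as a valid alternative (it is the method carried out in \cite{Bottcher}), and it has the advantage of being a single closed computation rather than a cascade of case analyses. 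The paper's method has the advantage that the intermediate facts $\vb_i=\wb_i$ for $i\le n-2$ are reused verbatim in the later theorems for non-cyclic $\PP$.

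Two concrete problems remain. First, your normalization target is wrong: after dropping the last coordinate, $A_{n-1}$ is identified with $\Z^{n-1}$, so the $(n-1)\times(n-1)$ determinant of the $\vb_i$ must come out to $\pm n$ (the index $[A_{n-1}:B_{n-1}]=n$, equivalently $\det\Lp=n^{3/2}=n\cdot\det A_{n-1}$), not $\pm 1$ or a ``small unit''; aiming for $\pm1$ would lead you to conclude the computation failed when it has in fact succeeded. Second, the determinant evaluation is the entire content of the argument and is left as a sketch; the telescoping of the first $n-3$ rows is routine, but the interaction with the two exceptional rows $\vb_{n-2},\vb_{n-1}$ and with the constant $\pm1$ column in position $0$ is exactly where the value $n$ is produced, and nothing in the proposal pins this down. (The membership check also needs repair --- the nonzero entries of $\vb_j$ for $1\le j\le n-3$ sit at positions $0,j,j+1,n-1$, giving weighted sum $j-(j+1)-(n-1)=-n\equiv 0$, and the two exceptional vectors give weighted sum $0$ --- but you flagged this yourself and it is indeed mechanical.)
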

\begin{proof}
First, we note that each $\vb_i$ ($1\le i \le n-1$) is a minimal vector of $\Lp$. It is easy to see that if there exist real numbers $a_1,\ldots,a_{n-1}$ such that
$$
a_1\vb_1+a_2\vb_2+\cdots+ a_{n-1}\vb_{n-1}=0,
$$
we have $a_1=\cdots=a_{n-1}=0$. Thus,
$\vb_1,\ldots,\vb_{n-1}$ are linearly independent vectors of $\Lp$. By Lemma \ref{Cassels}, there exists a basis $\{\wb_1,\wb_2,\ldots,\wb_{n-1}\}$ of $\Lp$ such that
$$
\vb_i=a_{i1}\wb_1+\cdots+a_{ii}\wb_i
$$
with integers $0\le a_{ij}<a_{ii}$ $(j<i; i=1,2,\ldots,n-1)$.

Consider $\vb_1=a_{11}\wb_1$, since $\vb_1$ is a minimal vector, we have $a_{11}=1$ and $\vb_1=\wb_1$. Now, we consider $\vb_2=a_{21}\vb_1+a_{22}\wb_2$; comparing the fourth  entries of the vectors, we deduce that $a_{22}=1$, and thus $a_{21}=0$ and $\vb_2=\wb_2$. In fact, applying the same arguments, we obtain
$$
a_{ii}=1 \quad \textrm{and} \quad \vb_i=\wb_i \quad \textrm{for $1\le i \le n-3$}.
$$

Now, suppose $\wb_{n-2}=(x_0,x_1,\ldots,x_{n-1})$ and consider
$$
\vb_{n-2}=a_{n-2,1}\vb_1+\cdots+a_{n-2,n-3}\vb_{n-3}+a_{n-2,n-2}\wb_{n-2}.
$$
Comparing the first and the last entries of the vectors respectively, we obtain
\begin{equation*}
\left\{\begin{array}{ll}
 -1=a_{n-2,1}+\cdots +a_{n-2,n-3}+a_{n-2,n-2} x_0,\\
 0=-a_{n-2,1}-\cdots -a_{n-2,n-3}+a_{n-2,n-2} x_{n-1}.
\end{array}\right.
\end{equation*}
Then summing up the above two equations, we have $-1=a_{n-2,n-2} (x_0+x_{n-1})$, which implies that $a_{n-2,n-2}=1$. So, for $1\le j <n-2$ we have $a_{n-2,j}=0$. Thus 
$$
\vb_{n-2}=\wb_{n-2}.
$$

Finally, we suppose $\wb_{n-1}=(x_0,x_1,\ldots,x_{n-1})$. 
%Note that $x_0+x_1+\cdots+x_{n-1}=0$. 
Consider
\begin{equation}\label{Vn1}
\vb_{n-1}=a_{n-1,1}\vb_1+\cdots+a_{n-1,n-2}\vb_{n-2}+a_{n-1,n-1}\wb_{n-1}.
\end{equation}
We put $b_j=a_{n-1,j}$ for $1\le j \le n-1$. 
By comparing the entries of the vectors in \eqref{Vn1}, we obtain the following system of linear equations:
\begin{equation}\label{system}
\left\{\begin{array}{ll}
 -1=b_{1}+b_{2}+\cdots +b_{n-3}-b_{n-2}+b_{n-1}x_0,\\
 1=b_{1}+b_{n-2}+b_{n-1}x_1,\\
 0=-b_{1}+b_{2}+b_{n-1}x_2,\\
 0=-b_{2}+b_{3}+b_{n-1}x_3,\\
 \vdots \\
 0=-b_{n-5}+b_{n-4}+b_{n-1}x_{n-4},\\
 0=-b_{n-4}+b_{n-3}+b_{n-2}+b_{n-1}x_{n-3},\\
 1=-b_{n-3}-b_{n-2}+b_{n-1}x_{n-2},\\
 -1=-b_{1}-b_{2}-\cdots -b_{n-3}+b_{n-1}x_{n-1}.
\end{array}\right.
\end{equation}
Here, we want to indicate that only the first two and the last three  equations from \eqref{system} show up when $n=5$. 
Notice that $0\le b_{j}<b_{n-1}$ for $1\le j<n-1$, which will be used repeatedly without indications.  
Considering the second equation: $1=b_{1}+b_{n-2}+b_{n-1}x_1$, we have $-1\le x_1 \le 1$. If $x_1=1$, then we must have $b_{n-1}=1$, which implies that 
$$
\vb_{n-1}=\wb_{n-1}.
$$
 So, to complete the proof it suffices to prove that $x_1=1$.

Applying the same arguments, we find that
$$
x_2=x_3=\cdots =x_{n-4}=0 \quad \textrm{and}  \quad x_{n-2}=1.
$$
Summing the two equations corresponding to $x_{n-3}$ and $x_{n-2}$, we have $1=-b_{n-4}+b_{n-1}(x_{n-3}+x_{n-2})$, which implies that $x_{n-3}+x_{n-2}=1$. So, we have $x_{n-3}=0$. Moreover, the equation corresponding to $x_{n-1}$ implies that $-1\le x_{n-1} \le n-4$.

Assume that $x_1=-1$. By Theorem \ref{principal}, we have
\begin{equation}\label{point}
x_0\Pb_0 + x_1\Pb_1 + \cdots + x_{n-1}\Pb_{n-1} = \Ob,
\end{equation}
that is
$$
x_1+2x_2+\cdots +(n-1)x_{n-1} \equiv 0 \mod n.
$$
Substituting relevant formulas, we get $x_{n-1}+3 \equiv 0 \mod n$, which contradicts with $2 \le x_{n-1}+3 \le n-1$. So, we have $x_1\ne -1$. Similarly, we can show that $x_1\ne 0$. Thus, we must have $x_1 = 1$. 
This completes the proof of the theorem.
\end{proof}

Here, we want to remark that in the proof of Theorem \ref{basis}, we don't use properties of $\wb_i$ like \eqref{point} when we prove $\vb_i=\wb_i$ for $1\le i \le n-2$. This observation is crucial for the next section.

In addition, another approach to prove Theorem \ref{basis} (and the coming Theorems \ref{basis2}, \ref{basis3}, \ref{basis4} and \ref{basism}) is to compute the determinant of the given vectors $\{\vb_1,\ldots,\vb_{n-1}\}$ and check whether it is equal to $\det \Lp$, which has been successfully performed in \cite{Bottcher}.

\section{Basis}

In this section, we want to prove the following theorem case by case.

\begin{theorem}\label{main}
Assume that $|\PP|\ge 5$. Then, $\Lp$ has a basis of minimal vectors.
\end{theorem}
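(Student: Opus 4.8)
The plan is to reduce to the classification of finite subgroups of $E(\F_q)$ and then, in each resulting shape, to exhibit an explicit family of minimal vectors and prove it is a basis by exactly the mechanism used for Theorem \ref{basis}. By the structure of $E(\F_q)$ (Theorem \ref{Ruck}), any subgroup $\PP$ is isomorphic to $\Z/m\Z \times \Z/n\Z$ with $m \mid n$. When $m = 1$ the group is cyclic and Theorem \ref{basis} already furnishes a basis of minimal vectors; since the cyclic group of order $4$ is the only genuinely obstructed case (Lemma \ref{234}) and it is removed by the hypothesis $|\PP|\ge 5$, it remains to treat the non-cyclic groups, i.e. $m\ge 2$. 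Here $m \mid n$ gives $|\PP| = mn \ge 4$ with equality only for $\Z/2\Z\times\Z/2\Z$, so every non-cyclic case in scope already has $mn \ge 8$ and no new small exception appears; the residual work is to split the non-cyclic range into finitely many shapes (the cases of Theorems \ref{basis2}--\ref{basism}) and dispatch each.

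For a given shape I would fix generators $\Pb$ of order $n$ and $\Qb$ of order $m$, index the points by a grid $\Pb_{i,j} = i\Pb + j\Qb$ with $0 \le i < n$ and $0 \le j < m$, and write down $mn - 1$ explicit vectors of the form $\Pb_a + \Pb_b - \Pb_c - \Pb_d$ with $\Pb_a + \Pb_b = \Pb_c + \Pb_d$, each a minimal vector by Lemma \ref{Lenny1}. Two natural families suggest themselves: \emph{row} vectors that reproduce, within each fixed layer $j$, the Barnes-type basis of Theorem \ref{basis}, and \emph{linking} vectors (for instance elementary parallelograms $\Pb_{i,j} + \Pb_{i,j'} - \Pb_{i+1,j} - \Pb_{i-1,j'}$ or commutator-type squares $\Pb_{i,j}+\Pb_{i+1,j'}-\Pb_{i+1,j}-\Pb_{i,j'}$) that tie the $m$ layers together and account for the extra rank. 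One then checks linear independence over $\R$ directly, applies Lemma \ref{Cassels} to obtain a lower-triangular basis $\{\wb_1,\dots,\wb_{mn-1}\}$ with $\vb_i = \sum_{j\le i} a_{ij}\wb_j$ and $0 \le a_{ij} < a_{ii}$, and finally proves $a_{ii}=1$ and $\vb_i=\wb_i$ for every $i$, working from the top down.

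The engine of each verification is precisely the observation flagged in the remark after Theorem \ref{basis}: for all but the last few indices, comparing two suitably chosen coordinates yields an equation of the shape $a_{ii}(x + x') = \pm 1$ for entries $x,x'$ of $\wb_i$, forcing $a_{ii}=1$ \emph{without} invoking the group law. Only for the final ``corner'' vectors, where this purely combinatorial cascade runs out, does one bring in Theorem \ref{principal} in the form $\sum_i x_i = 0$ together with $\sum_i x_i \Pb_i = \Ob$ to rule out the remaining values of the still-ambiguous entries, exactly as $x_1 \ne -1,0$ was eliminated in the cyclic proof via \eqref{point}. Ordering the candidate vectors so that each $\vb_i$ introduces a coordinate not yet pinned by its predecessors is what makes the triangular reduction close.

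The main obstacle I anticipate is not any single hard estimate but the two-dimensional bookkeeping: choosing the linking vectors and their ordering so that the chain of ``$a_{ii}=1$'' deductions propagates cleanly across both the $\Z/n\Z$ and the $\Z/m\Z$ directions, and isolating the handful of corner vectors where \eqref{point} is genuinely indispensable. A secondary nuisance is the degenerate shapes (for example $m=n$, or one factor small relative to the other), where the generic construction collapses and a slightly altered explicit basis must be recorded; this is presumably why the proof is organized as the several separate Theorems \ref{basis2}--\ref{basism} rather than one uniform statement. As the remark observes, an alternative that bypasses the cascade altogether is to compute the determinant of the candidate matrix and compare it with $\det\Lp$, and I would keep that in reserve as a cross-check on the trickier shapes.
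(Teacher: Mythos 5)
Your plan coincides with the paper's actual proof: the same reduction to group shapes $\Z/m\Z\times\Z/n\Z$, the same construction of row (Barnes-type) vectors within each layer plus linking vectors across layers, the same Lemma \ref{Cassels} triangular cascade forcing $a_{ii}=1$ coordinate by coordinate with Theorem \ref{principal} reserved for the final ``corner'' vectors, and the same determinant comparison held in reserve for the small shapes $(2,4)$, $(3,3)$, $(4,4)$ where the generic construction degenerates. The only thing separating your proposal from the published argument is the explicit bookkeeping you defer; the paper's Theorems \ref{basis2}--\ref{basism} and \ref{basis5} are precisely that bookkeeping carried out.
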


Actually, we will construct explicit bases in the proof. Combining Theorem \ref{main} with Lemma \ref{234} and its proof, we can see that for the lattice $\Lp$ attached to the elliptic curve $E$, it has a basis formed by minimal vectors except that $\PP$ is a cyclic group and $|\PP|=4$.

\begin{theorem}\label{main2}
Assume that $|\PP|\ge 2$. Then, $\Lp$ has a basis of minimal vectors except that $\PP$ is a cyclic group and $|\PP|=4$.
\end{theorem}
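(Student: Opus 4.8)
The plan is to prove Theorem~\ref{main2} by assembling the pieces already in place rather than by a single unified argument. The statement covers all $|\PP|\ge 2$, so I would split into the small cases $n=2,3,4$ and the generic case $n\ge 5$. For $n=2,3$ and for $n=4$ with $\PP$ non-cyclic, Lemma~\ref{234} tells us $\Lp$ is generated by its minimal vectors; but generation is weaker than having a \emph{basis} of minimal vectors, so I must extract an actual basis. Fortunately the proof of Lemma~\ref{234} is explicit: it writes an arbitrary lattice vector as an integer combination of the listed minimal vectors. For $n=2$ the lattice is $\{2kP-2kO\}$, which is rank one, so the single minimal vector $2P-2O$ already forms a basis. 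For $n=3$ the displayed identity expresses every vector using the two minimal vectors $-2P+Q+O$ and $P+Q-2O$, and since $\Lp$ has rank $2$ these two independent minimal vectors are a basis. For $n=4$ non-cyclic, the rank is $3$ and the three minimal vectors $\Pb_i+\Pb_j-\Pb_k-\Ob$ appearing in the proof of Lemma~\ref{234} are easily checked to be linearly independent, hence a basis.

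For the main case $n\ge 5$ I would invoke Theorem~\ref{main}, which asserts precisely that $\Lp$ has a basis of minimal vectors whenever $|\PP|\ge 5$; this is stated (and proved case by case, according to the group structure of $\PP$) earlier in the section, so Theorem~\ref{main2} follows for these $n$ by direct citation. The one remaining value is $n=4$ with $\PP$ cyclic, which is exactly the exception in the statement: by the final paragraph of the proof of Lemma~\ref{234}, the minimal vectors of $\Lp$ span only a rank-$2$ sublattice while $\Lp$ has rank $3$, so no basis of minimal vectors can exist. Thus the exception is genuinely necessary and not merely an artifact of the method.

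Putting these together, the proof is essentially a bookkeeping argument: handle $n=2,3$ and $n=4$ non-cyclic by exhibiting explicit independent minimal vectors drawn from the proof of Lemma~\ref{234}, defer $n\ge 5$ to Theorem~\ref{main}, and record $n=4$ cyclic as the impossible case. The only subtlety I anticipate is the gap between \emph{generated by} minimal vectors and \emph{having a basis of} minimal vectors; the former does not automatically yield the latter in general, as the paper itself warns by citing \cite{Conway1995}. The main obstacle is therefore verifying, in the small cases, that among the generating minimal vectors one can select exactly $n-1$ that are linearly independent. In each small case this is immediate because the explicit minimal vectors listed in Lemma~\ref{234} already number $n-1$ and are visibly independent, so the obstacle dissolves on inspection; the heavy lifting for the generic range has been absorbed into Theorem~\ref{main}.
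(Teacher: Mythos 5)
Your proposal is correct and follows essentially the same route as the paper, which derives Theorem~\ref{main2} by combining Theorem~\ref{main} for $|\PP|\ge 5$ with Lemma~\ref{234} and its explicit proof for $n=2,3,4$. Your extra care in noting that the $n-1$ generators produced in the small cases are automatically a basis (closing the gap between ``generated by'' and ``has a basis of'' minimal vectors) is exactly the point the paper relies on implicitly when it says the conclusion follows from the \emph{proof} of Lemma~\ref{234}.
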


We first recall a well-known result about the group structure of $E(\F_q)$ , which was first found by Tsfasman \cite{Tsfasman1985} and then independently proved by  R\"uck \cite{Ruck1987} and Voloch \cite{Voloch} following the paper of Schoof \cite{Schoof} (see the historical notes in \cite[Chapter 3]{Tsfasman2007}).

\begin{theorem}\label{Ruck}
Let $E$ be an elliptic curve defined over $\F_q$. Then, there exist positive integers $n_1,n_2$ such that 
$$
E(\F_q)\cong \Z/n_1\Z \times \Z/n_2\Z
$$
with $n_1|n_2$ and $n_1|q-1$.
\end{theorem}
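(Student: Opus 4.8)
The plan is to treat $E(\F_q)$ as an arbitrary finite abelian group and extract exactly the two special features that an elliptic curve provides: that its group of rational points has rank at most two, and that the smaller invariant factor is constrained by the roots of unity available in $\F_q$. I would begin from the invariant factor decomposition of finite abelian groups, writing $E(\F_q) \cong \Z/d_1\Z \times \cdots \times \Z/d_k\Z$ with $d_1 \mid d_2 \mid \cdots \mid d_k$ and each $d_i \ge 2$, and recall that the number of invariant factors $k$ equals $\max_\ell \dim_{\F_\ell} E(\F_q)[\ell]$, the maximum taken over all primes $\ell$ (this maximum being attained at any prime dividing $d_1$, since then $\ell$ divides every $d_i$).

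The first key step is to bound $k \le 2$. For a prime $\ell \ne p := \mathrm{char}(\F_q)$, the full geometric $\ell$-torsion satisfies $E[\ell](\overline{\F_q}) \cong (\Z/\ell\Z)^2$, so $E(\F_q)[\ell]$ is an $\F_\ell$-subspace of dimension at most $2$. For $\ell = p$ the geometric $p$-torsion is either trivial (supersingular case) or isomorphic to $\Z/p\Z$ (ordinary case), so the $p$-rank is at most $1$. Hence every $\ell$-rank is at most $2$, which forces $k \le 2$; writing $n_1 = d_1$ and $n_2 = d_2$ (and $n_1 = 1$ when $k \le 1$) gives $E(\F_q) \cong \Z/n_1\Z \times \Z/n_2\Z$ with $n_1 \mid n_2$. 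Moreover, since the $p$-rank is at most $1$, the prime $p$ can divide at most the largest invariant factor, so $p \nmid n_1$.

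The second, and main, step is to show $n_1 \mid q - 1$ via the Weil pairing. Because $p \nmid n_1$ and $n_1 \mid n_2$, the decomposition shows that the entire $n_1$-torsion $E[n_1] \cong (\Z/n_1\Z)^2$ lies inside $E(\F_q)$; that is, every $n_1$-torsion point is $\F_q$-rational, so the absolute Galois group $\mathrm{Gal}(\overline{\F_q}/\F_q)$ acts trivially on $E[n_1]$. I would then invoke the nondegenerate, alternating, Galois-equivariant Weil pairing $e_{n_1} \colon E[n_1] \times E[n_1] \to \mu_{n_1}$. For any $\sigma$ in the Galois group and any $P, Q \in E[n_1]$, equivariance together with triviality of the action gives $e_{n_1}(P,Q)^\sigma = e_{n_1}(P^\sigma, Q^\sigma) = e_{n_1}(P,Q)$, so every value of the pairing is fixed by Galois and hence lies in $\mu_{n_1} \cap \F_q^*$. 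Nondegeneracy forces the image of $e_{n_1}$ to be all of $\mu_{n_1}$, so it contains a primitive $n_1$-th root of unity; whence $\mu_{n_1} \subseteq \F_q^*$ and therefore $n_1 \mid |\F_q^*| = q - 1$.

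The main obstacle is this final step: the bound $n_1 \mid q-1$ is the genuinely arithmetic content and is not formal group theory, resting entirely on the existence and properties of the Weil pairing (nondegeneracy and Galois-equivariance). The rank bound $k \le 2$, by contrast, is essentially immediate from the torsion structure of an elliptic curve. The one point requiring care is the verification that $p \nmid n_1$, which is precisely what lets the Weil pairing (defined for orders prime to $p$) be applied to the full $n_1$-torsion; handling the $p$-part of $E(\F_q)$ separately through the ordinary/supersingular dichotomy is what keeps the argument clean.
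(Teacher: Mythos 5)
Your proof is correct, but note that the paper itself contains no proof of this statement: Theorem \ref{Ruck} is imported from the literature, with the paper citing Tsfasman, R\"uck, Voloch and Schoof for its provenance. What you have reconstructed is in fact the classical argument from those sources (R\"uck's note in particular rests on the Weil pairing in exactly this way), so your route agrees with the standard one rather than with anything in the paper. Both halves of your argument are sound: the rank bound $k\le 2$ follows as you say from $E[\ell](\overline{\F_q})\cong(\Z/\ell\Z)^2$ for $\ell\ne p$ together with the ordinary/supersingular dichotomy at $p$, and your observation that $p\nmid n_1$ is precisely the point that legitimizes applying the Weil pairing to the full $n_1$-torsion --- this is the step most often glossed over, and you handle it correctly. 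Two small remarks for polish. First, your claim that nondegeneracy forces the image of $e_{n_1}$ to be all of $\mu_{n_1}$ is true here but deserves a word: for a general nondegenerate pairing on an arbitrary finite abelian group surjectivity can fail, whereas for the alternating nondegenerate Weil pairing on $E[n_1]\cong(\Z/n_1\Z)^2$ one can choose a symplectic basis $P,Q$ with $e_{n_1}(P,Q)$ a primitive $n_1$-th root of unity; alternatively just cite \cite[Chapter III, Proposition 8.1]{Silverman}, which states surjectivity directly. Second, your Galois-triviality step uses that $E(\F_q)[n_1]\cong(\Z/n_1\Z)^2$ has the same order as the geometric $n_1$-torsion (which requires $p\nmid n_1$ so that $|E[n_1]|=n_1^2$), hence $E[n_1]\subseteq E(\F_q)$; you do say this, and it is the right way to phrase it. In short: a complete and correctly organized proof of a result the paper only quotes.
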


Theorem \ref{Ruck} implies that every subgroup $\PP$ of $E(\F_q)$ can be described as
\begin{equation}\label{group}
\PP \cong \Z/m\Z \times \Z/n\Z
\end{equation}
with $1\le m \le n$. Thus, the lattice $\Lp$ is also associated to the group $\Z/m\Z \times \Z/n\Z$.

In the sequel, we will identify $\PP$ with the group $\Z/m\Z \times \Z/n\Z$ with $1\le m \le n$; for any integer $k\ge 1$, we also identify the group $\Z/k\Z$ with $\{0,1,2,\ldots,k-1\}$ in the sense of modulo $k$.

If $\gcd(m,n)=1$, then actually $\PP$ is a cyclic group, which has been discussed in Section \ref{special}. Here, we will construct a basis for $\Lp$ by using minimal vectors in more general settings.

In this section, the entries of a vector $\x=(x_0,x_1,\ldots,x_{mn-1})\in \Lp$ correspond to the following order of points in $\PP$:
\begin{align*}
&(0,0),(0,1),\ldots,(0,n-1),(1,0),(1,1),\ldots,(1,n-1),\ldots,\\
& \qquad \qquad (m-1,0),(m-1,1),\ldots,(m-1,n-1).
\end{align*} 

Now, we start to prove Theorem \ref{main} case by case. 

\begin{theorem}\label{basis2}
 Assume that $\PP$ satisfies $\PP \cong \Z/2\Z \times \Z/n\Z$ with $n\ge 5$. Then, $\Lp$ has a basis of minimal vectors.
\end{theorem}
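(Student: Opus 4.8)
The plan is to mimic the strategy of Theorem \ref{basis} as closely as possible: exhibit an explicit list of $2n-1$ minimal vectors, verify they are linearly independent, and then use Lemma \ref{Cassels} to promote the list to a genuine basis. Since $\PP \cong \Z/2\Z \times \Z/n\Z$ with $n \ge 5$, the two ``sheets'' indexed by the first coordinate $0$ and $1$ each look like a copy of the cyclic situation, so I would start by taking a translate of the cyclic basis \eqref{basis1} inside the sheet $\{(0,j)\}_{0\le j\le n-1}$ to produce $n-1$ minimal vectors supported on that sheet. Each minimal vector has the form $P+Q-R-S$ with $\Pb+\Qb=\Rb+\Sb$ (Lemma \ref{Lenny1}), which within a single sheet reduces exactly to the Barnes conditions \eqref{Barnes0}, so these vectors are legitimate. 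The remaining $n$ basis vectors must tie the two sheets together; here I would use ``rung'' vectors of the form $(0,j)+(1,k)-(0,j')-(1,k')$ with the group-law constraint $(0,j)+(1,k)=(0,j')+(1,k')$ in $\Z/2\Z\times\Z/n\Z$, choosing them so that the projection to the first coordinate is handled and the determinant stays minimal. The delicate part is selecting these cross-sheet vectors so that the full collection spans all of $\Lp$ and not merely a finite-index sublattice.

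Once a candidate list $\{\vb_1,\dots,\vb_{2n-1}\}$ is fixed, linear independence over $\R$ should follow from a triangular pattern of pivot entries, exactly as in Theorem \ref{basis}: I would order the vectors and locate in each a coordinate where it is the ``first'' to act nontrivially, so that a vanishing linear combination forces the coefficients to die one at a time. After independence, Lemma \ref{Cassels} supplies a basis $\{\wb_1,\dots,\wb_{2n-1}\}$ with $\vb_i=a_{i1}\wb_1+\cdots+a_{ii}\wb_i$ and $0\le a_{ij}<a_{ii}$. The crucial observation from the remark following Theorem \ref{basis} is that, for most indices $i$, one shows $a_{ii}=1$ and $\vb_i=\wb_i$ purely by comparing coordinate entries, using only that $\wb_i\in A_{mn-1}$ (i.e.\ its entries sum to zero) and that $\vb_i$ is minimal with $\|\vb_i\|^2=4$; these steps do not invoke the full defining congruence of $\Lp$.

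For the last one or two vectors the entry-comparison argument will narrow the unknown $\wb_{2n-1}=(x_0,\dots,x_{mn-1})$ down to a short range of possibilities but will not close the case by itself, and this is where I expect the real obstacle. At that stage I would invoke Theorem \ref{principal} in the form
$$
\sum_{i} x_i \,\Pb_i = \Ob \quad\text{and}\quad \sum_i x_i = 0,
$$
translating the surviving ambiguity into a congruence condition on the coordinates via the group structure $\Z/2\Z\times\Z/n\Z$. As in the cyclic proof, the goal is to show that every alternative to $\vb_{2n-1}=\wb_{2n-1}$ violates this congruence, thereby forcing $a_{2n-1,2n-1}=1$. The main difficulty is bookkeeping: there are now two independent group constraints (one per cyclic factor) rather than the single congruence $\sum_i i x_i\equiv 0 \bmod n$, so I would need to track the first-coordinate parity condition alongside the second-coordinate mod-$n$ condition, and check the finitely many residual cases against \emph{both}. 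The hypothesis $n\ge 5$ should guarantee enough room in these congruences to rule out the spurious solutions, just as it did in Theorem \ref{basis}.
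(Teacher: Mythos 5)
Your overall strategy coincides with the paper's: split $\PP$ into the two sheets, reuse the cyclic-case basis of Theorem \ref{basis}, add cross-sheet minimal vectors, prove linear independence, and run the Lemma \ref{Cassels} argument, invoking Theorem \ref{principal} only at the very end. Your observation that $\vb_i=\wb_i$ can be forced for most indices by pure entry comparison (without the defining congruence of $\Lp$) is also correctly carried over from the remark after Theorem \ref{basis}. Nevertheless there is a genuine gap: the explicit list of $2n-1$ minimal vectors is never written down, and you yourself flag the selection of the cross-sheet vectors as ``the delicate part.'' That selection \emph{is} the content of the theorem. Without it one cannot check linear independence, cannot set up the triangular systems in the Cassels step, and cannot verify that the final group-law argument eliminates the residual cases --- that last step depends sensitively on the specific entries of the last one or two vectors. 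Saying ``choose the rungs so that the collection spans all of $\Lp$ and not merely a finite-index sublattice'' restates the problem rather than solving it.

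Your proposed shape of the list also differs from the one that works in the paper, in a way that adds difficulty. You allot $n-1$ vectors to sheet $0$ and then $n$ cross-sheet ``rungs.'' The paper instead supports almost everything inside a single sheet: the full cyclic basis \eqref{basis1} on Part 0 ($n-1$ vectors), the \emph{first $n-2$} vectors of \eqref{basis1} transplanted to Part 1, and only \emph{two} cross-sheet vectors, namely the ones corresponding to $(0,1)+(1,2)-(0,2)-(1,1)$ and $(0,1)+(0,2)-(1,1)-(1,2)$. With $n$ rungs, any combination producing a vector supported purely on Part 1 must cancel all Part-0 contributions, so showing that you obtain the whole rank-$(n-1)$ sublattice supported on Part 1 (and not a proper finite-index sublattice of it) is exactly the index computation your sketch omits. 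To complete the proof along your lines you must either exhibit the rungs and do that computation, or adopt the paper's arrangement, in which the in-sheet vectors are handled verbatim by the Theorem \ref{basis} argument and only the two cross vectors require new work (the second of which is ruled out of having $a_{ii}=2$ precisely by the mod-$2$ first-coordinate constraint from Theorem \ref{principal}, as you anticipated).
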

\begin{proof}
We divide the points of $\PP$ into two parts as follows:
\begin{equation*}
\left\{\begin{array}{ll}
  \pa 0: (0,0),(0,1),\ldots,(0,n-1);\\
  \pa 1: (1,0),(1,1),\ldots,(1,n-1).
\end{array}\right.
\end{equation*}

We first associate to Part 0 the $n-1$ minimal vectors $\vb_1,\vb_2,\ldots,\vb_{n-1}$ (labelled in the same order) as described in Theorem \ref{basis}, and then extend them to be minimal vectors of $\Lp$ by setting the other $n$ entries zero.

We also associate to Part 1 the $n-2$ minimal vectors $\vb_{n},\vb_{n+1},\ldots,\vb_{2n-3}$ (labelled in the same order) as the first $n-2$ minimal vectors described in Theorem \ref{basis}, and then extend them to be minimal vectors of $\Lp$ by setting the other $n$ entries zero.

Now, define the following minimal vectors of $\Lp$:
$$
\vb_{2n-2}=(\underbrace{0,1,-1,0,\ldots,0}_{\textrm{Part 0}}, \underbrace{0,-1,1,0,\ldots,0}_{\textrm{Part 1}})
$$
and
$$
\vb_{2n-1}=(\underbrace{0,1,1,0,\ldots,0}_{\textrm{Part 0}},   \underbrace{0,-1,-1,0,\ldots,0}_{\textrm{Part 1}}).
$$
We claim that the above minimal vectors $\vb_1,\vb_2,\ldots,\vb_{2n-1}$ form a basis of $\Lp$.

First, we prove that $\vb_1,\vb_2,\ldots,\vb_{2n-1}$ are linearly independent. Here (and also in the proofs of Theorems \ref{basis3}, \ref{basis4} and \ref{basism}), we use the fact that $\Z$-linear independence and $\R$-linear independence are equivalent for vectors in $\R^m$ with integer entries for any positive integer $m$. Suppose that there exist $a_1,a_2,\ldots,a_{2n-1}\in\Z$ such that
$$
a_1\vb_1+a_2\vb_2+\cdots+a_{2n-1}\vb_{2n-1}=0.
$$
For each vector $\vb_i$ ($1\le i \le 2n-1$), we consider the sum of its entries  corresponding to Part 0. This sum is zero if $1\le i \le 2n-2$. So, we have $a_{2n-1}=0$. Now, we consider 
$$
a_1\vb_1+a_2\vb_2+\cdots+a_{2n-2}\vb_{2n-2}=0.
$$
Considering the entries of $\vb_i$ ($1\le i \le 2n-2$) corresponding to Part 1, we find that this part of $\vb_{2n-2}$ corresponds to the point $(0,1)$, while this part of $\vb_i$ ($1\le i \le 2n-3$) corresponds to the point $(0,0)$. So, we must have $a_{2n-2}=0$. Clearly by Theorem \ref{basis}, $\vb_1,\vb_2,\ldots,\vb_{2n-3}$ are linearly independent, this completes the proof of linear independence.

By Lemma \ref{Cassels}, there exists a basis $\{\wb_1,\wb_2,\ldots,\wb_{2n-1}\}$ of $\Lp$ such that
$$
\vb_i=a_{i1}\wb_1+\cdots+a_{ii}\wb_i
$$
with integers $0\le a_{ij}<a_{ii}$ $(j<i; i=1,2,\ldots,2n-1)$. We first consider the entries corresponding to Part 0 for $\vb_1,\ldots,\vb_{n-1}$, and then consider the entries corresponding to Part 1 for $\vb_{n},\ldots,\vb_{2n-3}$.  Applying the same arguments as in the proof of Theorem \ref{basis}, we can obtain
$$
a_{ii}=1 \quad \textrm{and} \quad \vb_i=\wb_i \quad \textrm{for $1\le i \le 2n-3$}.
$$

Now, we consider 
\begin{equation}\label{V2n1}
\vb_{2n-2}=a_{2n-2,1}\vb_1+\cdots+a_{2n-2,2n-3}\vb_{2n-3}+a_{2n-2,2n-2}\wb_{2n-2}.
\end{equation}
We define $b_j=a_{2n-2,n-1+j}$ for $1\le j \le n-1$; in particular, $b_{n-1}=a_{2n-2,2n-2}$. Suppose that the entries of $\wb_{2n-2}$ corresponding to Part 1 are $(x_0,x_1,\ldots,x_{n-1})$. 
By comparing the entries of the vectors corresponding to Part 1 in \eqref{V2n1}, we obtain the following system of linear equations with integer coefficients and integer variables similar as \eqref{system}:
\begin{equation}\label{S2n1}
\left\{\begin{array}{ll}
 0=b_1+b_2+\cdots +b_{n-3}-b_{n-2}+b_{n-1}x_0,\\
 -1=b_{1}+b_{n-2}+b_{n-1}x_1,\\
 1=-b_{1}+b_{2}+b_{n-1}x_2,\\
 0=-b_{2}+b_{3}+b_{n-1}x_3,\\
 \vdots \\
 0=-b_{n-5}+b_{n-4}+b_{n-1}x_{n-4},\\
 0=-b_{n-4}+b_{n-3}+b_{n-2}+b_{n-1}x_{n-3},\\
 0=-b_{n-3}-b_{n-2}+b_{n-1}x_{n-2},\\
 0=-b_{1}-b_{2}-\cdots -b_{n-3}+b_{n-1}x_{n-1}.
\end{array}\right.
\end{equation}
Here, we want to indicate that only the first two and the last two equations from \eqref{S2n1} show up when $n=5$, and the equation corresponding to $x_2$ actually is $1=-b_1+b_2+b_3+b_4x_2$. When $n=6$, only the first three and the last three equations from \eqref{S2n1} appear.  These two individual cases can be handled similarly as the general case, and we can get  
$$
\vb_{2n-2}=\wb_{2n-2}
$$
when $n=5,6$.

Now, for $n\ge 7$ we need to solve the equation system \eqref{S2n1}. 
Notice that $0\le b_j <b_{n-1}$ for every $1\le j <n-1$, which will be used repeatedly without indications.  
Summing up the two equations corresponding to $x_0$ and $x_{n-1}$, we get $0=-b_{n-2}+b_{n-1}(x_0+x_{n-1})$, which implies that 
$$
x_0+x_{n-1}=0 \fand b_{n-2}=0.
$$ 
From the equation corresponding to $x_{n-2}$, we have $0=-b_{n-3}+b_{n-1}x_{n-2}$, which gives
$$
x_{n-2}=0 \fand b_{n-3}=0.
$$ 
Considering the equation corresponding to $x_{n-3}$, we have $0=-b_{n-4}+b_{n-1}x_{n-3}$, which yields
$$
x_{n-3}=0 \fand b_{n-4}=0.
$$ 
By the equations corresponding to $x_3,\ldots,x_{n-4}$ and noticing $b_{n-4}=0$, we find that
$$
x_3=\cdots =x_{n-4}=0 \fand b_2=\cdots=b_{n-4}=0.
$$
Then, the equation corresponding to $x_0$ becomes $0=b_1+b_{n-1}x_0$, which implies that 
$$
x_0=0 \fand b_1=0.
$$
 As a result, the equation corresponding to $x_1$ becomes $-1=b_{n-1}x_1$, thus we have $b_{n-1}=1$. So, we get  
$$
\vb_{2n-2}=\wb_{2n-2}.
$$

Finally, we consider 
\begin{equation}\label{V2n2}
\vb_{2n-1}=a_{2n-1,1}\vb_1+\cdots+a_{2n-1,2n-2}\vb_{2n-2}+a_{2n-1,2n-1}\wb_{2n-1},
\end{equation}
we define $b_j=a_{2n-1,n-1+j}$ for $1\le j \le n$; in particular, $b_{n}=a_{2n-1,2n-1}$. Suppose that the entries of $\wb_{2n-1}$ corresponding to Part 1 are $(x_0,x_1,\ldots,x_{n-1})$. 
By comparing the entries of the vectors corresponding to Part 1 in \eqref{V2n2}, we obtain the following system of linear equations with integer coefficients and integer variables:
\begin{equation}\label{S2n2}
\left\{\begin{array}{ll}
 0=b_1+b_2+\cdots +b_{n-3}-b_{n-2}+b_{n}x_0,\\
 -1=b_{1}+b_{n-2}-b_{n-1}+b_{n}x_1,\\
 -1=-b_{1}+b_{2}+b_{n-1}+b_{n}x_2,\\
 0=-b_{2}+b_{3}+b_{n}x_3,\\
 \vdots \\
 0=-b_{n-5}+b_{n-4}+b_{n}x_{n-4},\\
 0=-b_{n-4}+b_{n-3}+b_{n-2}+b_{n}x_{n-3},\\
 0=-b_{n-3}-b_{n-2}+b_{n}x_{n-2},\\
 0=-b_{1}-b_{2}-\cdots -b_{n-3}+b_{n}x_{n-1}.
\end{array}\right.
\end{equation}
Here, we want to indicate that only the first two and the last two equations from \eqref{S2n2} show up when $n=5$, and the equation corresponding to $x_2$ actually is $-1=-b_1+b_2+b_3+b_4+b_5x_2$. When $n=6$, only the first three and the last three equations from \eqref{S2n2} appear.  These two individual cases can be handled similarly as the  general case, and we can get 
$$
\vb_{2n-1}=\wb_{2n-1}
$$
when $n=5,6$.

Now, for $n\ge 7$ we need to solve the equation system \eqref{S2n2}.
Notice that $0\le b_j <b_n$ for every $1\le j <n$. 
Applying the same arguments exactly as solving \eqref{S2n1}, we get 
$$
x_0=x_3=x_4=\cdots=x_{n-1}=0
$$
and 
$$
b_1=b_2=\cdots=b_{n-2}=0.
$$
As a result, the equation corresponding to $x_2$ becomes $-1=b_{n-1}+b_nx_2$, thus we have 
$$
x_2=-1 \fand b_n=b_{n-1}+1.
$$
Now, the equation corresponding to $x_1$ becomes $-2=b_n(x_1-1)$. So, we have $b_n=1$ or 2. Suppose that $b_n=2$, then we have $x_1=0$, and thus the entries of $\wb_{2n-1}$ corresponding to Part 1 are $(0,0,-1,0,\ldots,0)$, which contradicts with $\wb_{2n-1}\in \Lp$. Indeed, notice that for any point in Part 0, its first component is 0; by regarding the first component we can see that $\wb_{2n-1}$ does not satisfy the condition of point addition in Theorem \ref{principal}, and thus we have $\wb_{2n-1}\not\in \Lp$. So, we must have $b_n=1$, that is 
$$
\vb_{2n-1}=\wb_{2n-1}, 
$$
which completes the proof of the theorem.
\end{proof}

\begin{theorem}\label{basis3}
 Assume that $\PP$ satisfies $\PP \cong \Z/3\Z \times \Z/n\Z$ with $n\ge 5$. Then, $\Lp$ has a basis of minimal vectors.
\end{theorem}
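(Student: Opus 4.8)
The plan is to follow the architecture of the proof of Theorem~\ref{basis2}, replacing its two ``parts'' by three. Writing the $3n$ coordinates as three blocks indexed by the $\Z/3\Z$-coordinate, I label them Part~$0$, Part~$1$, Part~$2$, with Part~$j$ carrying the points $(j,0),(j,1),\ldots,(j,n-1)$. The governing observation is that a coefficient pattern $\sum_i c_i(0,i)$ on Part~$0$ is a principal divisor exactly when $\sum_i c_i=0$ and $\sum_i i\,c_i\equiv 0 \pmod n$, and that transplanting it into Part~$j$ as $\sum_i c_i(j,i)$ again gives a principal divisor: the coefficient sum and the $\Z/n\Z$-moment are unchanged, while the $\Z/3\Z$-moment equals $j\sum_i c_i=0$. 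Thus every minimal vector furnished by Theorem~\ref{basis}, being of the form $P+Q-R-S$ from Lemma~\ref{Lenny1} with four entries $\pm1$, stays a minimal vector of $\Lp$ after transplantation into any part.

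First I would populate the parts: attach the full basis $\vb_1,\ldots,\vb_{n-1}$ of Theorem~\ref{basis} to Part~$0$, and attach only the first $n-2$ of those vectors to each of Part~$1$ and Part~$2$, extending everything by zeros. This produces $(n-1)+(n-2)+(n-2)=3n-5$ minimal vectors, so I must adjoin $4=2(m-1)$ further ``connecting'' minimal vectors to reach the rank $3n-1$ of $\Lp$. This is where $m=3$ genuinely departs from $m=2$ and is the main obstacle. In Theorem~\ref{basis2} the two connectors were the transposition $(0,1)+(1,2)-(0,2)-(1,1)$ and the weight-carrier $(0,1)+(0,2)-(1,1)-(1,2)$; the former still satisfies the first-coordinate congruence ($0+1-0-1\equiv 0 \pmod 3$), but the latter has first-coordinate sum $-2\equiv 1 \pmod 3$ and is no longer principal. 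The weight-carrying connectors must therefore be rebuilt so that their four first-coordinates sum to $0$ modulo $3$, which forces them to reach into all three parts, for instance $(0,1)+(0,2)-(1,1)-(2,2)$ (first coordinates $0+0-1-2\equiv 0$, second coordinates $1+2-1-2\equiv 0$). I would pick the four connectors as two transposition-type vectors and two such all-three-part weight-carriers, arranged so that they supply the directions omitted from Parts~$1,2$ together with the two cross-part directions making up the rank deficit.

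Having fixed the $3n-1$ candidates, I would first verify their linear independence by the leading/unique-support method of Theorem~\ref{basis2}: the part-weight sums $\sum_i x_{(j,i)}$ detect the weight-carrying connectors, a distinguished coordinate isolates each transposition connector, and within each part the independence of $\vb_1,\ldots,\vb_{n-2}$ is inherited from Theorem~\ref{basis}. Then I would apply Lemma~\ref{Cassels} to obtain a basis $\{\wb_i\}$ of $\Lp$ with $\vb_i=a_{i1}\wb_1+\cdots+a_{ii}\wb_i$ and $0\le a_{ij}<a_{ii}$, and run the triangular elimination. For the $3n-5$ part vectors the computation of Theorem~\ref{basis} carries over verbatim to force $a_{ii}=1$ and $\vb_i=\wb_i$, precisely because that stage never invoked the point-addition relation \eqref{point}.

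The hard part will be closing the elimination for the four connectors, i.e.\ proving index~$1$ rather than merely full rank. As in Theorem~\ref{basis2}, each connector should reduce to a small linear system in the coordinates of the relevant $\wb$, with the leading coefficient $b$ constrained by $0\le b_j<b$; solving it will pin down all but one coordinate and leave a dichotomy such as $b\in\{1,2\}$. To exclude $b=2$ I would invoke the full point-addition condition of Theorem~\ref{principal}, which for $\PP\cong\Z/3\Z\times\Z/n\Z$ comprises both $\sum b\,x_{(a,b)}\equiv 0 \pmod n$ and $\sum a\,x_{(a,b)}\equiv 0 \pmod 3$, and show that the putative $\wb$ associated with $b=2$ violates the mod-$3$ relation and hence cannot lie in $\Lp$. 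Once every diagonal coefficient is shown to be $1$, the candidates coincide with the $\wb_i$ and form a basis of $\Lp$ made of minimal vectors, proving the theorem.
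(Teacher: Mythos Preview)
Your plan is correct and follows essentially the same architecture as the paper's proof: three parts, the full $n-1$ vectors on Part~$0$ and $n-2$ on each of Parts~$1,2$, four cross-part connectors (two transposition-type and two weight-carriers touching all three parts), linear independence via part-sums, Lemma~\ref{Cassels}, and the final obstruction via the mod-$3$ first-coordinate congruence from Theorem~\ref{principal}. The paper is slightly more economical in that it observes that the first three connectors are handled verbatim by the argument of Theorem~\ref{basis2}, so only the very last connector requires a new computation, and there it works only with the part-sum unknowns $x=\sum_{\text{Part }1}$ and $y=\sum_{\text{Part }2}$ of $\wb_{3n-1}$ rather than individual coordinates.
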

\begin{proof}
We divide the points of $\PP$ into three parts as follows:
\begin{equation*}
\left\{\begin{array}{ll}
  \pa 0: (0,0),(0,1),\ldots,(0,n-1);\\
  \pa 1: (1,0),(1,1),\ldots,(1,n-1);\\
  \pa 2: (2,0),(2,1),\ldots,(2,n-1).
\end{array}\right.
\end{equation*}

We first associate to Part 0 the $n-1$ minimal vectors $\vb_1,\vb_2,\ldots,\vb_{n-1}$ (labelled in the same order) as described in Theorem \ref{basis}, and then extend them to be minimal vectors of $\Lp$ by setting the other $2n$ entries zero.

For each Part $i$ ($i=1,2$) (first Part 1, and then Part 2), we also associate to Part $i$ the $n-2$ minimal vectors (labelled in the same order) as the first $n-2$ minimal vectors described in Theorem \ref{basis}, and then extend them to be minimal vectors of $\Lp$ by setting the other $2n$ entries zero. By this way, we obtain another $2(n-2)$ minimal vectors of $\Lp$: $\vb_{n},\vb_{n+1},\ldots,\vb_{3n-5}$ (listed via the indicated order).

Now, define the following minimal vectors of $\Lp$:
$$
\vb_{3n-4}=(\underbrace{0,1,-1,0,\ldots,0}_{\textrm{Part 0}}, \underbrace{0,-1,1,0,\ldots,0}_{\textrm{Part 1}},
\underbrace{0,0,0,0,\ldots,0}_{\textrm{Part 2}}),
$$
$$
\vb_{3n-3}=(\underbrace{0,1,-1,0,\ldots,0}_{\textrm{Part 0}}, \underbrace{0,0,0,0,\ldots,0}_{\textrm{Part 1}},
\underbrace{0,-1,1,0,\ldots,0}_{\textrm{Part 2}}),
$$
$$
\vb_{3n-2}=(\underbrace{1,0,0,0,\ldots,0}_{\textrm{Part 0}}, 
\underbrace{0,0,0,1,0,\ldots,0}_{\textrm{Part 1}},
 \underbrace{0,-1,-1,0,\ldots,0}_{\textrm{Part 2}}),
$$
and 
$$
\vb_{3n-1}=(\underbrace{1,0,0,0,\ldots,0}_{\textrm{Part 0}}, 
\underbrace{-1,-1,0,0,\ldots,0}_{\textrm{Part 1}},
 \underbrace{0,1,0,0,\ldots,0}_{\textrm{Part 2}}).
$$
We claim that the above minimal vectors $\vb_1,\vb_2,\ldots,\vb_{3n-1}$ form a basis of $\Lp$.

First, we prove that $\vb_1,\vb_2,\ldots,\vb_{3n-1}$ are linearly independent. Suppose that there exist $a_1,a_2,\ldots,a_{3n-1}\in\Z$ such that
$$
a_1\vb_1+a_2\vb_2+\cdots+a_{3n-1}\vb_{3n-1}=0.
$$
For each vector $\vb_i$ ($1\le i \le 3n-1$), we consider the sum of its entries corresponding to Part 0. This sum is zero if $1\le i \le 3n-3$. So, we have 
$a_{3n-2}+a_{3n-1}=0$. Similarly, by considering the sum of entries corresponding to Part 1, we get $a_{3n-2}-2a_{3n-1}=0$. So, we obtain $a_{3n-2}=a_{3n-1}=0$. Now, we consider 
$$
a_1\vb_1+a_2\vb_2+\cdots+a_{3n-3}\vb_{3n-3}=0.
$$
Considering the entries of $\vb_i$ ($1\le i \le 3n-3$) corresponding to Part 2, we find that this part of $\vb_{3n-3}$ corresponds to the point $(0,1)$, while this part of $\vb_i$ ($1\le i \le 3n-4$) corresponds to the point $(0,0)$. 
So, we must have $a_{3n-3}=0$.
Similarly, by comparing the entries corresponding to Part 1, we can obtain $a_{3n-4}=0$. Clearly, $\vb_1,\vb_2,\ldots,\vb_{3n-5}$ are linearly independent, this completes the proof of linear independence.

By Lemma \ref{Cassels}, there exists a basis $\{\wb_1,\wb_2,\ldots,\wb_{3n-1}\}$ of $\Lp$ such that
$$
\vb_i=a_{i1}\wb_1+\cdots+a_{ii}\wb_i
$$
with integers $0\le a_{ij}<a_{ii}$ $(j<i; i=1,2,\ldots,3n-1)$. From the proof of Theorem \ref{basis2}, it follows directly that
$$
\vb_i=\wb_i \quad \textrm{for $1\le i \le 3n-2$}.
$$ 
Now,  consider 
\begin{equation}\label{V3n1}
\vb_{3n-1}=a_{3n-1,1}\vb_1+\cdots+a_{3n-1,3n-2}\vb_{3n-2}+a_{3n-1,3n-1}\wb_{3n-1}.
\end{equation}
We let $a=a_{3n-1,3n-2}$ and $b=a_{3n-1,3n-1}$. We also suppose that the sums of entries of $\wb_{3n-1}$ corresponding to Part 1 and Part 2 are $x$ and $y$, respectively. Then by \eqref{V3n1}, we can get the following:
\begin{equation}\label{S4n1}
\left\{\begin{array}{ll}
 -2=a+bx,\\
 1=-2a+by.
\end{array}\right.
\end{equation}
 Notice that $0\le a<b$. From the second equation, we find that 
 $$
 y=1.
 $$
By the first equation, we get 
$$
x=-2 \ffor x=-1.
$$
If $x=-2$, then we must have $b=1$, which implies that 
$$
\vb_{3n-1}=\wb_{3n-1}.
$$
So, to complete the proof it remains to show that $x\ne -1$.

We assume that $x=-1$. Note that $y=1$. Considering the point addition related to $\wb_{3n-1}$ as in Theorem \ref{principal}, we can see that the sum is not $(0,0)$ by regarding the first component (actually the first component is 1), which contradicts with $\wb_{3n-1}\in \Lp$. So, we have $x\ne -1$. This completes the proof of the theorem. 
\end{proof}

\begin{theorem}\label{basis4}
 Assume that $\PP$ satisfies $\PP \cong \Z/4\Z \times \Z/n\Z$ with $n\ge 5$. Then, $\Lp$ has a basis of minimal vectors.
\end{theorem}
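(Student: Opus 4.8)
The plan is to follow the template of Theorems~\ref{basis2} and~\ref{basis3} with four parts. I would first split the $4n$ points of $\PP$ by their first coordinate into Part~$0$, Part~$1$, Part~$2$, Part~$3$, where Part~$k$ consists of $(k,0),(k,1),\dots,(k,n-1)$. To Part~$0$ I attach all $n-1$ minimal vectors $\vb_1,\dots,\vb_{n-1}$ of Theorem~\ref{basis} (extended by zeros outside Part~$0$), and to each of Part~$1$, Part~$2$, Part~$3$ I attach the first $n-2$ of them, yielding $(n-1)+3(n-2)=4n-7$ intra-part minimal vectors. Since $\Lp$ has rank $4n-1$, I then need exactly six \emph{cross} minimal vectors to complete a basis, in keeping with the pattern $2(m-1)$ of the $m=2,3$ cases.

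For the six cross vectors I would mimic the two families used in Theorem~\ref{basis3}. The first family consists of three ``second-coordinate'' connectors of the form $\Pb+\Qb-{\bf R}-{\bf S}$ with $\Pb=(0,1)$, $\Qb=(k,2)$, ${\bf R}=(0,2)$, ${\bf S}=(k,1)$ for $k=1,2,3$; each is a minimal vector since $(0,1)+(k,2)=(k,3)=(0,2)+(k,1)$ in the group, while its weighted first coordinate $k-k$ cancels. The second family consists of three ``first-coordinate'' connectors, again minimal vectors $\Pb+\Qb-{\bf R}-{\bf S}$ but with the four points spread across the parts so that the weighted first-coordinate sum is $\equiv 0 \pmod{4}$; these supply the rank missing in the $\Z/4\Z$ direction. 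I would verify that all six have the shape prescribed by Lemma~\ref{Lenny1}, and that, together with the $4n-7$ intra-part vectors, they are linearly independent by the same device as before: since the intra-part vectors and the second-coordinate connectors all have vanishing part-sums while the three first-coordinate connectors have independent part-sum vectors, summing the entries over the four parts forces the coefficients of the first-coordinate connectors to vanish; inspecting individual coordinate positions then kills the second-coordinate connectors; and the residual independence is that of the intra-part vectors, guaranteed by Theorem~\ref{basis}.

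I would then invoke Lemma~\ref{Cassels} to obtain a basis $\{\wb_1,\dots,\wb_{4n-1}\}$ of $\Lp$ with $\vb_i=a_{i1}\wb_1+\cdots+a_{ii}\wb_i$ and $0\le a_{ij}<a_{ii}$, and prove $\vb_i=\wb_i$ for every $i$. Restricting attention to the entries of a single part reproduces the computations of Theorems~\ref{basis2} and~\ref{basis3} essentially verbatim, so that the intra-part vectors and all but the last cross vector are pinned down with no new input; concretely I expect $\vb_i=\wb_i$ for $1\le i\le 4n-2$ to follow directly from the proof of Theorem~\ref{basis3}. The only genuinely new computation is the final triangular relation for $\vb_{4n-1}$, which, after collapsing to a small linear system in the diagonal coefficient and the part-sums of $\wb_{4n-1}$ (analogous to \eqref{S4n1}), leaves a short list of candidate values for that diagonal coefficient to eliminate.

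The main obstacle is exactly this elimination. In Theorems~\ref{basis2} and~\ref{basis3} the single bad candidate was discarded by observing that the resulting $\wb$ violates the first-coordinate part of the point-addition criterion in Theorem~\ref{principal}. Here that criterion reads $\sum_{(i,j)} i\,x_{(i,j)}\equiv 0 \pmod{4}$, so the modulus is larger, more candidate values survive the purely linear constraints, and the six interlocking cross vectors make the bookkeeping heavier than in the $\Z/2\Z$ and $\Z/3\Z$ cases. The mechanism is unchanged, though: any $\wb_{4n-1}\in\Lp$ must satisfy this mod-$4$ condition, and I expect each forbidden value of the diagonal coefficient to force a $\wb_{4n-1}$ whose weighted first-coordinate sum is a nonzero residue modulo $4$, contradicting $\wb_{4n-1}\in\Lp$. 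Driving this modular argument through every surviving candidate is the crux; once it is done, $\vb_{4n-1}=\wb_{4n-1}$, and $\{\vb_1,\dots,\vb_{4n-1}\}$ is the required basis of minimal vectors.
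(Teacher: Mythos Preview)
Your proposal follows essentially the same architecture as the paper's proof: four parts, the same $(n-1)+3(n-2)=4n-7$ intra-part minimal vectors, three second-coordinate connectors matching the paper's $\vb_{4n-6},\vb_{4n-5},\vb_{4n-4}$, three first-coordinate connectors, then Lemma~\ref{Cassels} and the reduction of the first $4n-2$ equalities $\vb_i=\wb_i$ to the earlier theorems. The linear-independence argument via part-sums is also the paper's.

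The one place where your expectation diverges from the paper is the final elimination. You anticipate having to invoke the mod-$4$ first-coordinate congruence from Theorem~\ref{principal} to rule out spurious values of the last diagonal coefficient, and you flag this as the crux, with ``more candidate values'' surviving and ``heavier bookkeeping'' than in the $m=2,3$ cases. In fact the paper sidesteps this entirely by taking
\[
\vb_{4n-1}=(\underbrace{1,0,\ldots,0}_{\textrm{Part }0},\underbrace{-1,0,\ldots,0}_{\textrm{Part }1},\underbrace{-1,0,\ldots,0}_{\textrm{Part }2},\underbrace{1,0,\ldots,0}_{\textrm{Part }3}),
\]
with part-sums $(1,-1,-1,1)$. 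Writing $a=a_{4n-1,4n-3}$, $b=a_{4n-1,4n-2}$, $c=a_{4n-1,4n-1}$ and letting $x,y,z$ be the part-sums of $\wb_{4n-1}$ over Parts $1,2,3$, the resulting $3\times 3$ system
\[
-1=-2b+cx,\qquad -1=a+b+cy,\qquad 1=-2a+cz
\]
(with $0\le a,b<c$) forces $y=-1$, $z=1$, hence $c=a+b+1=2a+1$, so $a=b$ and $c=2b+1$; then the first equation reads $(2b+1)x=2b-1$, whence $2b+1\mid 2b-1$, so $b=0$ and $c=1$. No congruence is needed; a pure divisibility argument pins down the diagonal coefficient. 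Your plan is sound, but with this choice of $\vb_{4n-1}$ the obstacle you anticipate simply does not materialise, and the $m=4$ case is actually \emph{cleaner} at the last step than $m=2$ or $m=3$.
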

\begin{proof}
We divide the points in $\PP$ into four parts as follows:
\begin{equation*}
\left\{\begin{array}{ll}
  \pa 0: (0,0),(0,1),\ldots,(0,n-1);\\
  \pa 1: (1,0),(1,1),\ldots,(1,n-1);\\
  \pa 2: (2,0),(2,1),\ldots,(2,n-1);\\
  \pa 3: (3,0),(3,1),\ldots,(3,n-1).
\end{array}\right.
\end{equation*}

We first associate to Part 0 the $n-1$ minimal vectors $\vb_1,\vb_2,\ldots,\vb_{n-1}$ (labelled in the same order) as described in Theorem \ref{basis}, and then extend them to be minimal vectors of $\Lp$ by setting the other $3n$ entries zero.

For each Part $i$ ($i=1,2,3$) (first Part 1, then Part 2, and finally Part 3), we also associate to Part $i$ the $n-2$ minimal vectors (labelled in the same order) as the first $n-2$ minimal vectors described in Theorem \ref{basis}, and then extend them to be minimal vectors of $\Lp$ by setting the other $3n$ entries zero. By this way, we obtain another $3(n-2)$ minimal vectors of $\Lp$: $\vb_{n},\vb_{n+1},\ldots,\vb_{4n-7}$ (listed via the indicated order).

Now, define the following minimal vectors of $\Lp$:
\begin{align*}
\vb_{4n-6}=(\underbrace{0,1,-1,0,\ldots,0}_{\textrm{Part 0}}, \underbrace{0,-1,1,0,\ldots,0}_{\textrm{Part 1}},
\underbrace{0,0,\ldots,0}_{\textrm{Part 2}},
\underbrace{0,0,\ldots,0}_{\textrm{Part 3}}),
\end{align*}
$$
\vb_{4n-5}=(\underbrace{0,1,-1,0,\ldots,0}_{\textrm{Part 0}}, 
\underbrace{0,0,\ldots,0}_{\textrm{Part 1}},
\underbrace{0,-1,1,0,\ldots,0}_{\textrm{Part 2}},
\underbrace{0,0,\ldots,0}_{\textrm{Part 3}}),
$$
$$
\vb_{4n-4}=(\underbrace{0,1,-1,0,\ldots,0}_{\textrm{Part 0}}, 
\underbrace{0,0,\ldots,0}_{\textrm{Part 1}},
\underbrace{0,0,\ldots,0}_{\textrm{Part 2}},
\underbrace{0,-1,1,0,\ldots,0}_{\textrm{Part 3}}),
$$
$$
\vb_{4n-3}=(\underbrace{1,0,\ldots,0}_{\textrm{Part 0}}, 
\underbrace{0,0,\ldots,0}_{\textrm{Part 1}},
\underbrace{0,0,0,1,0,\ldots,0}_{\textrm{Part 2}},
 \underbrace{0,-1,-1,0,\ldots,0}_{\textrm{Part 3}}),
$$
$$
\vb_{4n-2}=(\underbrace{1,0,\ldots,0}_{\textrm{Part 0}}, 
\underbrace{0,-1,-1,0,\ldots,0}_{\textrm{Part 1}},
\underbrace{0,0,0,1,0,\ldots,0}_{\textrm{Part 2}},
\underbrace{0,0,\ldots,0}_{\textrm{Part 3}}),
$$
and 
$$
\vb_{4n-1}=(\underbrace{1,0,\ldots,0}_{\textrm{Part 0}}, 
\underbrace{-1,0,\ldots,0}_{\textrm{Part 1}},
\underbrace{-1,0,\ldots,0}_{\textrm{Part 2}},
 \underbrace{1,0,\ldots,0}_{\textrm{Part 3}}).
$$
We claim that the above minimal vectors $\vb_1,\vb_2,\ldots,\vb_{4n-1}$ form a basis of $\Lp$.

First, we prove that $\vb_1,\vb_2,\ldots,\vb_{4n-1}$ are linearly independent. Suppose that there exist $a_1,a_2,\ldots,a_{4n-1}\in\Z$ such that
$$
a_1\vb_1+a_2\vb_2+\cdots+a_{4n-1}\vb_{4n-1}=0.
$$
For each vector $\vb_i$ ($1\le i \le 4n-1$), we consider the sum of its entries corresponding to Part 0. This sum is zero if $1\le i \le 4n-4$. So, we have 
$a_{4n-3}+a_{4n-2}+a_{4n-1}=0$. Similarly, by considering the sum of entries corresponding to Part 1 and Part 2 respectively, we get $-2a_{4n-2}-a_{4n-1}=0$ and $a_{4n-3}+a_{4n-2}-a_{4n-1}=0$. So, we obtain $a_{4n-3}=a_{4n-2}=a_{4n-1}=0$. Besides, similar as before, we can get $a_{4n-4}=a_{4n-5}=a_{4n-6}=0$. Clearly, $\vb_1,\vb_2,\ldots,\vb_{4n-7}$ are linearly independent, this completes the proof of linear independence.

By Lemma \ref{Cassels}, there exists a basis $\{\wb_1,\wb_2,\ldots,\wb_{4n-1}\}$ of $\Lp$ such that
$$
\vb_i=a_{i1}\wb_1+\cdots+a_{ii}\wb_i
$$
with integers $0\le a_{ij}<a_{ii}$ $(j<i; i=1,2,\ldots,4n-1)$. From the proof of Theorem \ref{basis2}, it follows directly that
$$
\vb_i=\wb_i \quad \textrm{for $1\le i \le 4n-2$}.
$$ 
Now, consider 
\begin{equation}\label{V4n1}
\vb_{4n-1}=a_{4n-1,1}\vb_1+\cdots+a_{4n-1,4n-2}\vb_{4n-2}+a_{4n-1,4n-1}\wb_{4n-1}.
\end{equation}
We put $a=a_{4n-1,4n-3}, b=a_{4n-1,4n-2}$ and $c=a_{4n-1,4n-1}$. We also suppose that the sums of entries of $\wb_{4n-1}$ corresponding to Part 1, Part 2 and Part 3 are $x, y$ and $z$, respectively. Then by \eqref{V4n1}, we can get the following:
\begin{equation}\label{S4n1}
\left\{\begin{array}{ll}
 -1=-2b+cx,\\
 -1=a+b+cy,\\
 1=-2a+cz.
\end{array}\right.
\end{equation}
 Notice that $0\le a<c$ and $0\le b<c$. Considering the equation corresponding to $y$, we have $y=-1$ and $c=a+b+1$. Since $1=-2a+cz$, we find that $z=1$ and $c=2a+1$. Thus, we have 
 $$
 a=b \fand c=2b+1.
 $$
 Then, from the equation corresponding to $x$, we see that $2b+1$ is a divisor of $2b-1$. So, we must have $b=0$. Thus, we get $c=1$, which implies that 
 $$
 \vb_{4n-1}=\wb_{4n-1},
 $$ 
 which completes the proof of the theorem.
\end{proof}

\begin{theorem}\label{basism}
 Assume that $\PP$ satisfies $\PP \cong \Z/m\Z \times \Z/n\Z$ with $n\ge m \ge 5$. Then,  $\Lp$ has a basis of minimal vectors.
\end{theorem}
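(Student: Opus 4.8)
The plan is to follow the template of Theorems \ref{basis2}, \ref{basis3} and \ref{basis4}, while exploiting the extra room that $m\ge 5$ provides. Split $\PP$ into the $m$ parts $\pa i$: $(i,0),(i,1),\ldots,(i,n-1)$ for $0\le i\le m-1$, ordered by the first coordinate, and consider the \emph{collapse map} $\sigma\colon\Lp\to\Z^m$ sending a vector to the $m$-tuple of its part-sums. By Theorem \ref{principal}, $\x\in\Lp$ iff $\sum_{(a,b)}x_{(a,b)}=0$, $\sum_{(a,b)}a\,x_{(a,b)}\equiv 0\pmod m$ and $\sum_{(a,b)}b\,x_{(a,b)}\equiv 0\pmod n$; from the first two conditions one sees that $\sigma$ maps $\Lp$ onto the Barnes lattice $B_{m-1}$ of \eqref{eq:Barnes} attached to the first coordinate, and that $\ker\sigma$ is saturated. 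I would build the basis in three layers. First, attach to Part $0$ the full set of $n-1$ minimal vectors of Theorem \ref{basis}, and to each Part $i$ with $1\le i\le m-1$ the first $n-2$ of them; these $mn-2m+1$ ``internal'' vectors lie in $\ker\sigma$. Second, take the $m-1$ ``linking'' minimal vectors $P_{(0,1)}+P_{(i,2)}-P_{(0,2)}-P_{(i,1)}$, $1\le i\le m-1$, which also lie in $\ker\sigma$ and tie the $n$-direction of each Part $i$ to that of Part $0$. Finally, since $m\ge 5$, Theorem \ref{basis} applies to $B_{m-1}$ itself: I would take its basis \eqref{basis1} (with $m$ in place of $n$) and lift each of its $m-1$ minimal vectors to a minimal vector of $\Lp$ by placing all four points at second coordinate $0$, so that the relation $\Pb+\Qb={\bf R}+{\bf S}$ of Lemma \ref{Lenny1} holds automatically. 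This ``second family'' is carried by $\sigma$ to the chosen basis of $B_{m-1}$. In total this is a list of $mn-1$ minimal vectors.

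For the verification I would argue exactly as before. Linear independence follows by first applying $\sigma$ (which annihilates the internal and linking vectors and sends the second family to an independent set), and then running the per-part coordinate bookkeeping together with Theorem \ref{basis} for the internal vectors, as in the independence step of Theorem \ref{basis4}. Next, invoke Lemma \ref{Cassels} to obtain a triangular basis $\{\wb_1,\ldots,\wb_{mn-1}\}$ with $\vb_i=a_{i1}\wb_1+\cdots+a_{ii}\wb_i$ and $0\le a_{ij}<a_{ii}$, in the order internal, then linking, then second family. For the internal and linking vectors the identities $\vb_i=\wb_i$ hold word for word by the arguments of Theorems \ref{basis} and \ref{basis2}: as the remark after Theorem \ref{basis} stresses, those steps use only the coordinate patterns (systems of the type \eqref{system} and \eqref{S2n1}) and never the point-addition condition, so they transfer unchanged to all $m-1$ linking vectors at once. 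Thus $\vb_i=\wb_i$ for $1\le i\le mn-m$, and $\wb_1,\ldots,\wb_{mn-m}$ is a basis of $\ker\sigma$.

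The decisive step is the second family, and this is where $m\ge 5$ pays off. Because $\wb_1,\ldots,\wb_{mn-m}$ is a basis of the saturated sublattice $\ker\sigma$, the images $\sigma(\wb_{mn-m+1}),\ldots,\sigma(\wb_{mn-1})$ form a basis of $\Lp/\ker\sigma\cong B_{m-1}$, again satisfying the triangular inequalities $0\le a_{ij}<a_{ii}$. Applying $\sigma$ to the Lemma \ref{Cassels} relation for each second-family vector then reproduces verbatim the situation of Theorem \ref{basis} for $B_{m-1}$: it becomes $\sigma(\vb_i)=\sum_{j}a_{ij}\,\sigma(\wb_j)+a_{ii}\,\sigma(\wb_i)$ with $\sigma(\vb_i)$ a minimal vector of $B_{m-1}$, so the proof of Theorem \ref{basis} forces $a_{ii}=1$, whence $0\le a_{ij}<a_{ii}$ gives $a_{ij}=0$ and $\vb_i=\wb_i$. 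The hard part will be to make this reduction airtight: one must verify that $\sigma$ is onto $B_{m-1}$ with $\ker\sigma$ saturated, that the internal and linking vectors really exhaust $\ker\sigma$ with index one (which the Cassels elimination of the second paragraph certifies), and, inside the induction on $B_{m-1}$, that the final second-family vector is pinned down by the mod-$m$ condition of Theorem \ref{principal}, exactly as the last vector in the proof of Theorem \ref{basis} is pinned down by \eqref{point}. Granting this, $\vb_1,\ldots,\vb_{mn-1}$ is the desired basis of minimal vectors.
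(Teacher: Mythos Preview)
Your proposal is correct and follows essentially the same route as the paper: the three layers of minimal vectors you construct (internal, linking, and the second family supported on points with second coordinate $0$) are exactly the paper's $\vb_1,\ldots,\vb_{mn-1}$, and your verification via Lemma~\ref{Cassels} mirrors the paper's. The one cosmetic difference is that you package the ``sum of entries corresponding to Part~$i$'' computations into the collapse map $\sigma$ onto $B_{m-1}$ and then invoke the proof of Theorem~\ref{basis} once for the whole second family, whereas the paper replays that argument coordinate by coordinate (arriving at the system \eqref{Smn2}); this is the same reasoning in slightly more conceptual dress.
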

\begin{proof}
We divide the points of $\PP$ into $m$ parts as follows:
\begin{equation}\label{part1}
\left\{\begin{array}{ll}
  \pa 0: (0,0),(0,1),\ldots,(0,n-1);\\
  \pa 1: (1,0),(1,1),\ldots,(1,n-1);\\
  \vdots \\
  \pa m-1: (m-1,0),(m-1,1),\ldots,(m-1,n-1).
\end{array}\right.
\end{equation}

We first associate to Part 0 the $n-1$ minimal vectors $\vb_1,\vb_2,\ldots,\vb_{n-1}$ (labelled in the same order) as described in Theorem \ref{basis}, and then extend them to be minimal vectors of $\Lp$ by setting the other $(m-1)n$ entries zero.

For each Part $i$ ($1\le i \le m-1$) (the order is from 1 to $m-1$), we associate to Part $i$ the $n-2$ minimal vectors (labelled in the same order) as the first $n-2$ minimal vectors described in Theorem \ref{basis}, and then extend them to be minimal vectors of $\Lp$ by setting the other $(m-1)n$ entries zero. As a result, we get another $(m-1)(n-2)$ minimal vectors: $\vb_{n},\vb_{n+1},\ldots, \vb_{mn-2m+1}$ (listed via the indicated order).

Now, we define another $m-1$ minimal vectors of $\Lp$ as follows: first let the entries corresponding to Part 0 be $(0,1,-1,0,\ldots,0)$, then for each $i$ ($1\le i \le m-1$) (the order is from 1 to $m-1$), let the entries corresponding to Part $i$ be $(0,-1,1,0,\ldots,0)$, and put the other $(m-2)n$ entries zero. We denote these $m-1$ minimal vectors by $\vb_{mn-2m+2},\vb_{mn-2m+3},\ldots,\vb_{mn-m}$ (listed via the indicated order).

Then, we associate to the following $m$ points 
$$
(0,0),(1,0),(2,0),\ldots, (m-1,0)
$$
the $m-1$ minimal vectors (labelled in the same order) as described in Theorem \ref{basis}, and then extend them to be minimal vectors of $\Lp$ by setting the other $m(n-1)$ entries zero. We denote these $m-1$ minimal vectors by $\vb_{mn-m+1},\vb_{mn-m+2},\ldots,\vb_{mn-1}$ (listed via the indicated order).

Now, we claim that the above minimal vectors $\vb_1,\vb_2,\ldots,\vb_{mn-1}$ form a basis of $\Lp$.

First, we prove that $\vb_1,\vb_2,\ldots,\vb_{mn-1}$ are linearly independent. Suppose that there exist $a_1,a_2,\ldots,a_{mn-1}\in\Z$ such that
\begin{equation} \label{eq:linear}
a_1\vb_1+a_2\vb_2+\cdots+a_{mn-1}\vb_{mn-1}=0.
\end{equation}
Fixing $i$ ($0\le i \le m-1$), we consider the sum of entries of the vector $\vb_j$ $(1\le j \le mn-1)$ corresponding to Part $i$, this sum is zero if $1\le j \le mn-m$; furthermore, in view of \eqref{eq:linear} and the constructions, we can get the following system of linear  equations: 
\begin{equation}\label{Smn1}
\left\{\begin{array}{ll}
 0=a_{mn-m+1}+a_{mn-m+2}+\cdots +a_{mn-3}-a_{mn-2}-a_{mn-1},\\
 0=a_{mn-m+1}+a_{mn-2}+a_{mn-1},\\
 0=-a_{mn-m+1}+a_{mn-m+2},\\
 0=-a_{mn-m+2}+a_{mn-m+3},\\
 \vdots \\
 0=-a_{mn-5}+a_{mn-4},\\
 0=-a_{mn-4}+a_{mn-3}+a_{mn-2},\\
 0=-a_{mn-3}-a_{mn-2}+a_{mn-1},\\
 0=-a_{mn-m+1}-a_{mn-m+2}-\cdots -a_{mn-3}-a_{mn-1}.
\end{array}\right.
\end{equation}
Here, we want to indicate that only the first two and the last three  equations from \eqref{Smn1} show up when $m=5$. 
From \eqref{Smn1}, it is easy to see that 
$$
a_{mn-m+1}=a_{mn-m+2}=\cdots =a_{mn-1}=0.
$$
Besides, similar as before, we can get 
$$
a_{mn-2m+2}=a_{mn-2m+3}=\ldots=a_{mn-m}=0.
$$
 Clearly, $\vb_1,\vb_2,\ldots,\vb_{mn-2m+1}$ are linearly independent, this completes the proof of linear independence.

By Lemma \ref{Cassels}, there exists a basis $\{\wb_1,\wb_2,\ldots,\wb_{mn-1}\}$ of $\Lp$ such that
$$
\vb_i=a_{i1}\wb_1+\cdots+a_{ii}\wb_i
$$
with integers $0\le a_{ij}<a_{ii}$ $(j<i; i=1,2,\ldots,mn-1)$. From the proof of Theorem \ref{basis2}, it follows directly that
$$
\vb_i=\wb_i \quad \textrm{for $1\le i \le mn-m$}.
$$ 

Fixing an arbitrary integer $k$ with $mn-m+1\le k \le mn-3$ and in view of the constructions in \eqref{basis1}, we suppose that the first entry $-1$ of $\vb_k$ corresponds to Part $j$. Then, we consider the sum of entries of $\vb_i$ ($1\le i \le k$) corresponding to Part $j$. This sum is zero if $i<k$, and it is equal to -1 if $i=k$. This implies that 
$$
a_{ii=1} \fand \vb_i= \wb_i \quad \textrm{for $mn-m+1\le i \le mn-3$}.
$$ 

Now, let $k=mn-2$, we consider   
$$
\vb_k=a_{k1}\vb_1+\cdots+a_{k,k-1}\vb_{k-1}+a_{kk}\wb_k.
$$
 We compute the sum of entries of $\vb_i$ ($1\le i \le k$) corresponding to Part 0 and Part $m-1$. This sum is zero if $i<k$, and it is equal to -1 if $i=k$. This implies that 
$$
a_{kk}=1 \fand \vb_{mn-2}= \wb_{mn-2}.
$$ 

Finally, let $k=mn-1$, we consider   
$$
\vb_k=a_{k1}\vb_1+\cdots+a_{k,k-1}\vb_{k-1}+a_{kk}\wb_k.
$$ 
We put $b_j=a_{k,mn-m+j}$ for $1\le j \le m-1$. For each $i$ $(0\le i \le m-1)$, we assume that the sum of entries of $\wb_k$ corresponding to Part $i$ is $x_i$. Then, we can get a system of linear equations exactly like \eqref{system}:
\begin{equation}\label{Smn2}
\left\{\begin{array}{ll}
 -1=b_{1}+b_{2}+\cdots +b_{m-3}-b_{m-2}+b_{m-1}x_0,\\
 1=b_{1}+b_{m-2}+b_{m-1}x_1,\\
 0=-b_{1}+b_{2}+b_{m-1}x_2,\\
 0=-b_{2}+b_{3}+b_{m-1}x_3,\\
 \vdots \\
 0=-b_{m-5}+b_{m-4}+b_{m-1}x_{m-4},\\
 0=-b_{m-4}+b_{m-3}+b_{m-2}+b_{m-1}x_{m-3},\\
 1=-b_{m-3}-b_{m-2}+b_{m-1}x_{m-2},\\
 -1=-b_{1}-b_{2}-\cdots -b_{m-3}+b_{m-1}x_{m-1}.
\end{array}\right.
\end{equation}
Thus, as before we can obtain 
$$
\vb_{mn-1}=\wb_{mn-1},
$$
which completes the proof of the theorem.
\end{proof}

To complete the proof of Theorem \ref{main}, it remains to consider some special cases.
\begin{theorem}\label{basis5}
 Assume that $\PP$ satisfies $\PP \cong \Z/m\Z \times \Z/n\Z$ with $m\le n \le 4$ and $mn\ge 5$. Then, $\Lp$ has a basis of minimal vectors.
\end{theorem}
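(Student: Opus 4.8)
The plan is to note that the hypotheses $m\le n\le 4$ and $mn\ge 5$ leave only five pairs, which split according to $\gcd(m,n)$: the cyclic cases $(m,n)\in\{(2,3),(3,4)\}$ and the three non-cyclic cases $(m,n)\in\{(2,4),(3,3),(4,4)\}$. I would dispose of the cyclic family by reduction to Theorem \ref{basis}, and handle each non-cyclic lattice by an explicit construction together with a verification of the kind already used in this section.

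For the cyclic family, $\gcd(m,n)=1$ gives $\Z/m\Z\times\Z/n\Z\cong\Z/mn\Z$ with $mn\in\{6,12\}$, so $mn\ge 5$. A group isomorphism reindexes the $mn$ points of $\PP$, inducing a permutation of the coordinates of $\R^{mn}$; this permutation is an isometry fixing $A_{mn-1}$, carrying minimal vectors to minimal vectors, and---since the defining conditions $\sum_i x_i=0$ and $\sum_i x_i\Pb_i=\Ob$ of Theorem \ref{principal} are intrinsic to the group---carrying $\Lp$ onto the lattice attached to $\Z/mn\Z$. Hence the basis of minimal vectors produced by Theorem \ref{basis} transports back to a basis of minimal vectors for $\Lp$.

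For each of the three non-cyclic lattices, attached to $\Z/2\Z\times\Z/4\Z$, $\Z/3\Z\times\Z/3\Z$ and $\Z/4\Z\times\Z/4\Z$ (of ranks $7$, $8$ and $15$), I would write down an explicit list of $mn-1$ minimal vectors, each of the form $P+Q-R-S$ with $\Pb,\Qb,\mathbf{R},\mathbf{S}\in\PP$ distinct and $\Pb+\Qb=\mathbf{R}+\mathbf{S}$, prove their linear independence, and then confirm that they form a basis. The confirmation can follow the template of Theorems \ref{basis2}--\ref{basism}: invoke Lemma \ref{Cassels} to get a triangular basis $\{\wb_i\}$ and show $\vb_i=\wb_i$ for all $i$, using the point-addition constraint of Theorem \ref{principal} to eliminate the off-diagonal coefficients. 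Alternatively one computes the Gram determinant of the chosen vectors and checks that it equals $\det(\Lp)^2=(mn)^3$; here a short index count from Theorem \ref{principal} gives $[A_{mn-1}:\Lp]=mn$ and hence $\det\Lp=mn\sqrt{mn}$.

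The main obstacle is that the gluing used for $n\ge 5$ is unavailable, because those constructions apply Theorem \ref{basis} inside a single Part of $n$ points, and a norm-$2$ vector supported within one Part already needs four distinct points there. This breaks down completely for $\Z/3\Z\times\Z/3\Z$, where every Part has only three points and so supports no minimal vector within it, so the entire basis must be assembled from genuinely cross-Part minimal vectors; I expect this to be the delicate case. For $\Z/2\Z\times\Z/4\Z$ and $\Z/4\Z\times\Z/4\Z$ the Parts of size four do support norm-$2$ vectors, so a construction close to that of Theorems \ref{basis2} and \ref{basis4} works, with the explicit small-order minimal vectors of Lemma \ref{234} and Lemma \ref{Lenny1} playing the role that Theorem \ref{basis} played for larger $n$.
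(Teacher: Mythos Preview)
Your plan is correct and matches the paper's approach: the paper likewise dismisses the cyclic pairs $(2,3)$ and $(3,4)$ as already covered ``by the previous discussions'' (i.e.\ Theorem~\ref{basis}), then for each of the three non-cyclic cases $(2,4)$, $(3,3)$, $(4,4)$ writes down an explicit list of $mn-1$ minimal vectors and verifies it is a basis via the determinant alternative you mention, comparing against the forward Proposition~\ref{det}. Your anticipation that $(3,3)$ forces an entirely cross-Part construction is borne out by the paper's explicit basis there.
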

\begin{proof}
By the previous discussions, we only need to consider three cases: $m=2$ and $n=4$, $m=3$ and $n=3$, and $m=4$ and $n=4$.

By calculating the determinants with some computer algebra systems (like PARI/GP \cite{Pari}) and comparing with Proposition \ref{det}, one can easily check that the following is true.

If $m=2$ and $n=4$, the following is a required basis of $\Lp$:
\begin{equation}\label{basis24}
\left\{\begin{array}{ll}
  \vb_1=(1,1,-1,-1,0,0,0,0),\\
  \vb_2=(0,0,0,0,1,-1,-1,1),\\
  \vb_{3}=(1,1,0,0,-1,-1,0,0),\\
  \vb_{4}=(1,0,1,0,-1,0,-1,0),\\
  \vb_{5}=(1,0,0,1,-1,0,0,-1),\\
  \vb_6=(0,1,1,0,0,-1,-1,0),\\
  \vb_7=(1,-1,0,0,1,0,0,-1).
\end{array}\right.
\end{equation}

If $m=3$ and $n=3$, the following is a required basis of $\Lp$:
\begin{equation}\label{basis33}
\left\{\begin{array}{ll}
  \vb_1=(1,1,0,-1,0,0,0,-1,0),\\
  \vb_2=(1,1,0,0,-1,0,-1,0,0),\\
  \vb_{3}=(1,1,0,0,0,-1,0,0,-1),\\
  \vb_{4}=(1,0,1,-1,0,0,0,0,-1),\\
  \vb_{5}=(1,0,1,0,-1,0,0,-1,0),\\
  \vb_6=(1,0,1,0,0,-1,-1,0,0),\\
  \vb_7=(0,1,1,-1,0,0,-1,0,0),\\
  \vb_8=(1,0,0,-1,-1,0,0,1,0).
\end{array}\right.
\end{equation}

If $m=4$ and $n=4$, the following is a required basis of $\Lp$:
\begin{equation}\label{basis44}
\left\{\begin{array}{ll}
  \vb_1=(1,1,-1,-1,0,0,0,0,0,0,0,0,0,0,0,0),\\
  \vb_2=(0,0,0,0,1,1,-1,-1,0,0,0,0,0,0,0,0),\\
  \vb_{3}=(0,0,0,0,0,0,0,0,1,-1,-1,1,0,0,0,0),\\
  \vb_{4}=(0,0,0,0,0,0,0,0,0,0,0,0,1,-1,-1,1),\\
  \vb_{5}=(1,1,0,0,0,0,0,0,-1,-1,0,0,0,0,0,0),\\
  \vb_6=(1,0,1,0,0,0,0,0,-1,0,-1,0,0,0,0,0),\\
  \vb_7=(1,0,0,1,0,0,0,0,-1,0,0,-1,0,0,0,0),\\
  \vb_8=(0,1,1,0,0,0,0,0,0,-1,-1,0,0,0,0,0),\\
  \vb_9=(0,0,0,0,1,1,0,0,0,0,0,0,-1,-1,0,0),\\
  \vb_{10}=(0,0,0,0,1,0,1,0,0,0,0,0,-1,0,-1,0),\\
  \vb_{11}=(1,-1,0,0,1,0,0,-1,0,0,0,0,0,0,0,0),\\
  \vb_{12}=(1,-1,0,0,0,0,0,0,1,0,0,-1,0,0,0,0),\\
  \vb_{13}=(1,0,0,0,1,0,0,0,-1,0,0,0,-1,0,0,0),\\
  \vb_{14}=(1,0,0,0,0,-1,-1,0,0,0,0,1,0,0,0,0),\\
  \vb_{15}=(1,0,0,0,-1,-1,0,0,0,1,0,0,0,0,0,0).
\end{array}\right.
\end{equation}

\end{proof}

\section{Determinant}

For the lattice $\Lp$ attached to the elliptic curve $E$, it is easy to compute its  determinant.

\begin{proposition}\label{det}
If $|\PP|\ge 2$, then we have $\det \Lp= |\PP|^{3/2}$.
\end{proposition}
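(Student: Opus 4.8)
The plan is to compute $\det \Lp$ through the index formula for a full-rank sublattice. Since $\Lp$ has rank $n-1$ (with $n=|\PP|$) and sits inside the root lattice $A_{n-1}$, which has the same rank, the standard relation
$$
\det \Lp = [A_{n-1} : \Lp]\cdot \det A_{n-1}
$$
applies. I would first recall the well-known value $\det A_{n-1}=\sqrt{n}$; for instance, the Gram matrix of the basis $e_i-e_{i+1}$ is the Cartan matrix of type $A_{n-1}$, whose determinant equals $n$, so the covolume is $\sqrt{n}$. Everything then reduces to determining the index $[A_{n-1}:\Lp]$.

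To compute this index, I would introduce the summation homomorphism
$$
\psi: A_{n-1}\to \PP, \qquad \sum_{i=0}^{n-1}x_i P_i \mapsto \sum_{i=0}^{n-1}x_i\Pb_i,
$$
which is well defined because $\PP$ is a subgroup of $E(\F_q)$ containing every $\Pb_i$. By Theorem \ref{principal}, an element $\sum_i x_iP_i$ of $A_{n-1}$ (for which $\sum_i x_i=0$ holds automatically) corresponds to a principal divisor, i.e.\ lies in $\Lp$, precisely when $\sum_i x_i\Pb_i=\Ob$. Hence $\ker\psi = \Lp$.

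The remaining point is surjectivity of $\psi$. For each $j$, the divisor $P_j-P_0 = P_j-O$ belongs to $A_{n-1}$ and maps to $\Pb_j-\Ob=\Pb_j$, so every element of $\PP$ lies in the image. The first isomorphism theorem then yields $A_{n-1}/\Lp\cong \PP$, whence $[A_{n-1}:\Lp]=|\PP|=n$. Combining this with $\det A_{n-1}=\sqrt{n}$ gives
$$
\det \Lp = n\cdot\sqrt{n} = n^{3/2} = |\PP|^{3/2}.
$$

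I do not anticipate a serious obstacle; once the summation map is set up, the result is essentially an application of the first isomorphism theorem. The only steps needing care are verifying that $\ker\psi$ is exactly $\Lp$ (immediate from Theorem \ref{principal}) and recalling the normalization $\det A_{n-1}=\sqrt{n}$.
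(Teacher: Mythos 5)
Your proposal is correct and follows essentially the same route as the paper: both compute $\det \Lp$ as $[A_{n-1}:\Lp]\cdot\det A_{n-1}=n\cdot\sqrt{n}$; the paper simply asserts the index is $n$ "by definition," whereas you supply the (correct) justification via the summation homomorphism and the first isomorphism theorem.
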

\begin{proof}
Let $n=|\PP|$. First, we have $[A_{n-1}:\Lp]=n$ by definition. Besides, it is well-known that $\det A_{n-1}=\sqrt{n}$; see \cite[Proposition 4.2.2]{Martinet} (one should note that the definition there is slightly different from ours). Thus, we get
$$
\det \Lp=n \det A_{n-1}=n^{3/2}.
$$
\end{proof}

The famous Minkowski-Hlawka theorem asserts that for every $k> 1$ there exists a lattice $L$ of rank $k$ such that
\begin{equation}\label{Minkowski}
\Delta(L)\ge \zeta(k)/2^{k-1}.
\end{equation}
However, all the current proofs of this theorem are non-constructive. It is still not known how to construct lattices with packing densities satisfying \eqref{Minkowski} for arbitrary $k$. Here, we can find that the lattices $\Lp$ can provide several  examples with the help of computer.

\begin{proposition}
Assume that $4\le |\PP| \le 47$. Then, $\Lp$ satisfies \eqref{Minkowski}.
\end{proposition}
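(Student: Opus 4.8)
The plan is to reduce the claimed inequality \eqref{Minkowski} to a single explicit numerical inequality in $n:=|\PP|$ and then verify it for each $n$ in the stated range. First I would specialize the defining formula for the packing density. Since $n\ge 4$, Lemma \ref{Lenny1} gives $d(\Lp)=2$, Proposition \ref{det} gives $\det\Lp=n^{3/2}$, and the rank is $k=n-1$. Substituting these into
\[
\Delta(\Lp)=\frac{\pi^{k/2}d(\Lp)^{k}}{2^{k}\Gamma(\tfrac{k}{2}+1)\det\Lp},
\]
the powers of $2$ cancel and one obtains the closed form
\[
\Delta(\Lp)=\frac{\pi^{(n-1)/2}}{\Gamma(\tfrac{n+1}{2})\,n^{3/2}}=\frac{V_{n-1}}{n^{3/2}},
\]
where $V_{n-1}=\pi^{(n-1)/2}/\Gamma(\tfrac{n+1}{2})$ is the volume of the unit ball in $\R^{n-1}$. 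This is just the geometric identity that the packing density equals the volume of a ball of radius $d(\Lp)/2=1$ divided by the covolume $\det\Lp$.

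Next I would rewrite the target. The Minkowski--Hlawka bound \eqref{Minkowski} for a lattice of rank $k=n-1$ reads $\Delta(\Lp)\ge \zeta(n-1)/2^{n-2}$. Clearing the positive denominators, the proposition is equivalent to the inequality
\[
2^{n-2}\,\pi^{(n-1)/2}\ \ge\ \zeta(n-1)\,\Gamma\!\left(\tfrac{n+1}{2}\right)n^{3/2}
\]
for every integer $n$ with $4\le n\le 47$. This is now a finite list of explicit inequalities involving only $\pi$, the gamma function at half-integers (hence integers or rational multiples of $\sqrt{\pi}$), and the values $\zeta(3),\zeta(4),\ldots,\zeta(46)$, all of which can be evaluated to arbitrary precision. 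I would therefore verify these inequalities one by one with a computer algebra system such as PARI/GP \cite{Pari}, exactly as was done for the bases in Theorem \ref{basis5}.

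The only genuinely non-routine point is to explain why the range stops at $47$, that is, why no monotonicity or asymptotic shortcut replaces the case check. The numerator $V_{n-1}$ is the volume of a unit ball, which by Stirling's formula decays super-exponentially, like $\left(2\pi e/(n-1)\right)^{(n-1)/2}$, whereas the right-hand side $\zeta(n-1)/2^{n-2}$ decays only exponentially, since $\zeta(n-1)\to 1$. Hence $\Delta(\Lp)$ eventually falls below the Minkowski--Hlawka target, and one checks directly that the inequality first fails at $n=48$. This confirms that the statement is genuinely a finite range rather than a bound valid for all $|\PP|\ge 4$, and that the elementary case-by-case verification, though routine, cannot be bypassed. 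I expect this final observation, rather than any single computation, to be the main content of the argument.
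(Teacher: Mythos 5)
Your proposal is correct and is essentially the paper's own argument: the paper gives no written proof beyond an appeal to computer verification, and your reduction --- using $d(\Lp)=2$ from Lemma \ref{Lenny1}, $\det\Lp=n^{3/2}$ from Proposition \ref{det}, and rank $k=n-1$ to arrive at the explicit finite list of inequalities $2^{n-2}\pi^{(n-1)/2}\ge\zeta(n-1)\,\Gamma\!\left(\tfrac{n+1}{2}\right)n^{3/2}$ for $4\le n\le 47$ --- is precisely the computation that such a verification carries out. Your concluding Stirling-formula explanation of why the inequality first fails at $n=48$ is a useful supplement to the paper, which only asserts this as an unproved remark.
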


We want to remark that the above conclusion is no longer true when $|\PP| > 47$.

\section{Covering radius}

Let $L$ be a lattice in the Euclidean space $V:=\spa_{\R}L$. The \textit{covering radius} of $L$, denoted by $\mu(L)$, is defined as the smallest real number $r$ such that any point in $V$ is within distance $r$ from the lattice $L$. That is
$$
\mu(L)=\max_{\vb\in V}\min_{\x \in L} \| \vb-\x \|.
$$
 A \textit{deep hole} with respect to $L$ is a point $\vb\in V$ such that $\min_{\x \in L} \| \vb-\x \|=\mu(L)$.

In \cite[Theorem 3.4]{Lenny2014}, Fukshansky and Maharaj gave the following upper bound:
\begin{equation}\label{upper0}
\mu(\Lp)\le \frac{1}{2}(\sqrt{n^2+4n+8}+\sqrt{n}),
\end{equation}
where $n=|\PP|$. Here, we will improve this estimate substantially.

We first recall a construction in \cite[Proof of Theorem 3.4]{Lenny2014}.             Suppose that $\PP=\{\Pb_0,\Pb_1,\ldots,\Pb_{n-1}\}$ as described in Section \ref{preliminary} such that $\Pb_0=\Ob$. Given $\vb=(a_0,a_1,\ldots,a_{n-1}) \in A_{n-1}$, since $\PP$ is a group, there exists some $j$ ($0\le j \le n-1$) such that $a_0\Pb_0+a_1\Pb_1+ \cdots + a_{n-1}\Pb_{n-1}=\Pb_j$. Now, define
\begin{equation}\label{tilde}
\tilde{\vb}=\left\{\begin{array}{ll}
 (a_0+1,a_1,\ldots,a_{j-1},a_j-1,a_{j+1},\ldots,a_{n-1}) & \textrm{if $j\ne 0$,} \\
 \vb & \textrm{if $j=0$.}
\end{array}\right.
\end{equation}
Here, we want to indicate that if $j=1$, then the vector $\tilde{\vb}=(a_0+1,a_1-1,a_2,\ldots,a_{n-1})$. 
Thus, the vector $\tilde{\vb}\in \Lp$ by Theorem \ref{principal}, and $\|\vb - \tilde{\vb}\|\le \sqrt{2}$.

For the convenience of the reader, we first recall some basic results about $A_{n-1}$  without proof; see Section 6.1 of Chapter 4 in \cite{Conway}.

\begin{lemma}\label{An}
For $n\ge 2$, we have
\begin{equation}
\mu(A_{n-1})=\left\{\begin{array}{ll}
 \frac{1}{2}\sqrt{n} & \textrm{if $n$ is even,} \\
 \frac{1}{2}\sqrt{n-1/n} & \textrm{if $n$ is odd.}
\end{array}\right.
\notag
\end{equation}
In particular, let $i$ be the integer part of $n/2$ and $j=n-i$, a typical deep hole  with respect to $A_{n-1}$ is
\begin{equation}
\wb=\left\{\begin{array}{ll}
 (\frac{1}{2},\ldots,\frac{1}{2},-\frac{1}{2},\ldots,-\frac{1}{2}) & \textrm{if $n$ is even,} \\
 (\frac{1}{2}-\frac{1}{2n},\ldots,\frac{1}{2}-\frac{1}{2n},-\frac{1}{2}-\frac{1}{2n},\ldots,-\frac{1}{2}-\frac{1}{2n}) & \textrm{if $n$ is odd,}
\end{array}\right.
\notag
\end{equation}
with $j$ positive entries and $i$ negative entries.
\end{lemma}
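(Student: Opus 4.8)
The plan is to locate the deepest holes of $A_{n-1}$ explicitly, match the resulting maximal distance against the two closed forms, and treat the two directions of the covering–radius identity separately. Throughout I realize $A_{n-1}$ inside the hyperplane $V=\{\x\in\R^n:\sum_i x_i=0\}$, and I use the two basic structural facts about $A_{n-1}$: its Voronoi–relevant vectors are exactly the roots $\pm(e_i-e_j)$, and its dual $A_{n-1}^*$ is generated over $A_{n-1}$ by the glue vectors
$$
[i]=\Bigl(\underbrace{\tfrac{n-i}{n},\ldots,\tfrac{n-i}{n}}_{i},\ \underbrace{-\tfrac{i}{n},\ldots,-\tfrac{i}{n}}_{\,n-i}\Bigr),\qquad 0\le i\le n-1,
$$
which form a full set of coset representatives for the cyclic group $A_{n-1}^*/A_{n-1}$ of order $n$, and for which a direct computation gives $\|[i]\|^2=i(n-i)/n$. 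The first observation is that the two candidate deep holes $\wb$ in the statement are, after a coordinate permutation, precisely the glue vectors $[n/2]$ (for $n$ even) and $[(n+1)/2]$ (for $n$ odd); this is where the parity split and the exact entry values come from.

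For the lower bound $\mu(A_{n-1})\ge\|\wb\|$ I would show that the origin is a closest lattice point to $\wb$. Since the relevant vectors of $A_{n-1}$ are the roots $e_i-e_j$ with squared norm $2$, the point $\wb$ lies in the Voronoi cell of $0$ if and only if $\langle \wb,e_i-e_j\rangle\le 1$ for all $i\neq j$, that is, $\max_i \wb_i-\min_i \wb_i\le 1$. For both candidate holes the entries take only two values differing by exactly $1$, so this holds with equality; hence $d(\wb,A_{n-1})=\|\wb\|$. Evaluating $\|\wb\|^2=\|[i]\|^2=i(n-i)/n$ at $i=n/2$ and $i=(n+1)/2$ then gives $\tfrac14 n$ and $\tfrac14(n-1/n)$ respectively, so $\mu(A_{n-1})$ is at least the asserted value, and each such $\wb$ is exhibited as a genuine deep hole.

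The upper bound is the substantive part: I must show every $\vb\in V$ lies within the claimed distance of $A_{n-1}$. The approach is the rounding–and–adjustment closest–vector procedure. Round each coordinate of $\vb$ to a nearest integer to obtain $a\in\Z^n$ with errors $\delta_k\in(-\tfrac12,\tfrac12]$, and set $\sigma=\sum_k a_k=-\sum_k\delta_k\in\Z$. If $\sigma=0$ then $a\in A_{n-1}$, and a short extreme–point analysis of $\sum_k\delta_k^2$ subject to $\sum_k\delta_k=0$ and $\delta_k\in[-\tfrac12,\tfrac12]$ shows $\|\vb-a\|^2$ is at most $\tfrac14 n$ for $n$ even and at most $\tfrac14(n-1)\le\tfrac14(n-1/n)$ for $n$ odd. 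If $\sigma\neq 0$, then after negating if necessary I restore the sum by subtracting $1$ from the $\sigma$ coordinates with the most negative $\delta_k$ (since altering $a_k$ by $-1$ increases the squared error by $1-2\delta_k$, which is smallest there), producing a point of $A_{n-1}$, and I bound the resulting squared error.

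The main obstacle is exactly this last case: verifying that the greedy adjustment is optimal and that its worst case does not exceed $\max_i i(n-i)/n$. I expect the worst configuration to be precisely the extremal glue vector found above, so the upper bound meets the lower bound and the parity–dependent formula closes. An alternative route to the same upper bound, which I would keep in reserve, is to use that $\mu(A_{n-1})$ equals the circumradius of the Voronoi cell: the tightness condition $\max_i \wb_i-\min_i \wb_i=1$ together with the spanning of $V$ by the corresponding tight roots forces each vertex of the cell to have coordinates taking two values differing by $1$, hence to be a permutation of some glue vector $[i]$, whence $\mu(A_{n-1})^2=\max_{1\le i\le n-1} i(n-i)/n$. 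Tracking the equality cases throughout then confirms that the stated $\wb$ are the deep holes.
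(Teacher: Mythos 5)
The paper does not actually prove this lemma: it is quoted from Section 6.1 of Chapter 4 of \cite{Conway} explicitly ``without proof'', so there is no in-paper argument to measure yours against. Your identification of the stated $\wb$ with the glue vectors $[n/2]$ (for $n$ even) and $[(n+1)/2]$ (for $n$ odd), the computation $\|[i]\|^2=i(n-i)/n$, and the lower-bound verification that $\wb$ lies in the Voronoi cell of the origin are all correct. For the lower bound you do not even need the (true but unproved) assertion that the relevant vectors of $A_{n-1}$ are exactly the roots: since adding a constant vector to $\wb$ changes no inner product with elements of $A_{n-1}$, you may shift so that the entries of $\wb$ become $0$ and $-1$, whence $\langle\wb,\x\rangle=-\sum_{k\in S}x_k$ for $S$ the set of indices carrying the smaller entry; integrality and $\sum_k x_k=0$ give $\|\x\|^2\ge 2\bigl|\sum_{k\in S}x_k\bigr|$, so $\|\wb-\x\|^2\ge\|\wb\|^2$ directly for every $\x\in A_{n-1}$.

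The genuine gap is the upper bound, which you yourself flag as ``the main obstacle'': in the case $\sigma\neq0$ you never bound the squared error of the adjusted rounding, you only state that you ``expect'' the worst configuration to be the extremal glue vector. As written this is a plan, not a proof, and the greedy route is the more delicate of your two options, since one must control $\max_{\vb}\min_{\x}\|\vb-\x\|^2$ rather than the worst case of one fixed rounding rule, which a priori could overshoot. The Voronoi-cell route you keep ``in reserve'' is the one to execute, and it does close: a vertex of the cell of $0$ satisfies $w_i-w_j\le1$ for all $i\ne j$ with the tight pairs spanning $V$, so the graph on coordinates whose edges are the tight pairs is connected; along any path the accumulated difference is an integer of absolute value at most $1$, and $w_i-w_j=w_j-w_k=1$ would force $w_i-w_k=2>1$, so the coordinates take exactly two values differing by $1$, say $c$ (with multiplicity $j$) and $c-1$ (with multiplicity $i$); the zero-sum condition gives $c=i/n$, so every vertex is a coordinate permutation of $\pm[i]$ and $\mu(A_{n-1})^2=\max_{1\le i\le n-1}i(n-i)/n$, which is $n/4$ for $n$ even and $(n-1/n)/4$ for $n$ odd, with the maximizers being exactly the stated deep holes. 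Supply this argument (or the corresponding optimization for the rounding procedure) and your proof is complete.
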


Now, we give sharp bounds for the covering radius $\mu(\Lp)$.

\begin{theorem}\label{radius}
For $n=|\PP|\ge 2$, we have
$$
\mu(A_{n-1}) \le \mu(\Lp)\le \mu(A_{n-1})+\sqrt{2}.
$$
\end{theorem}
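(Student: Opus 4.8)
The plan is to establish the two inequalities in $\mu(A_{n-1}) \le \mu(\Lp) \le \mu(A_{n-1})+\sqrt{2}$ separately, exploiting the fact that $\Lp$ is a finite-index sublattice of $A_{n-1}$ living in the same ambient subspace $V = \spa_{\R} A_{n-1}$.

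For the lower bound $\mu(A_{n-1}) \le \mu(\Lp)$, the key observation is that whenever $L' \subseteq L$ are lattices spanning the same real vector space $V$, we always have $\mu(L) \le \mu(L')$: enlarging the lattice can only make it easier to approximate an arbitrary point. First I would record this as the general monotonicity principle. Concretely, for any $\vb \in V$ one has $\min_{\x \in \Lp}\|\vb - \x\| \ge \min_{\x \in A_{n-1}}\|\vb - \x\|$ because $\Lp \subseteq A_{n-1}$, so taking the max over $\vb \in V$ (both lattices span the same $V$ by the rank-$(n-1)$ property noted in the introduction) yields the claim. This direction is essentially immediate.

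For the upper bound $\mu(\Lp) \le \mu(A_{n-1}) + \sqrt{2}$, the plan is to use the explicit rounding construction $\vb \mapsto \tilde{\vb}$ recalled just before the statement from \cite[Proof of Theorem 3.4]{Lenny2014}, which sends any $\vb \in A_{n-1}$ into $\Lp$ at distance at most $\sqrt{2}$. Given an arbitrary point $\mathbf{u} \in V$, I would first approximate it by a nearest vector $\vb \in A_{n-1}$, so that $\|\mathbf{u} - \vb\| \le \mu(A_{n-1})$. Then I would round $\vb$ into $\Lp$ via the map above to get $\tilde{\vb} \in \Lp$ with $\|\vb - \tilde{\vb}\| \le \sqrt{2}$. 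The triangle inequality then gives
\begin{equation*}
\min_{\x \in \Lp}\|\mathbf{u} - \x\| \le \|\mathbf{u} - \tilde{\vb}\| \le \|\mathbf{u} - \vb\| + \|\vb - \tilde{\vb}\| \le \mu(A_{n-1}) + \sqrt{2}.
\end{equation*}
Since $\mathbf{u} \in V$ was arbitrary, taking the maximum gives $\mu(\Lp) \le \mu(A_{n-1}) + \sqrt{2}$.

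One subtlety I would check carefully is that the rounding map is applied correctly: the construction $\vb \mapsto \tilde{\vb}$ in \eqref{tilde} is stated for $\vb \in A_{n-1}$ (an integer vector), so the nearest lattice point of $A_{n-1}$ to $\mathbf{u}$ is exactly what feeds into it, and Theorem \ref{principal} guarantees $\tilde{\vb} \in \Lp$. The main (though still mild) obstacle is bookkeeping rather than conceptual: verifying that the estimate $\|\vb - \tilde{\vb}\| \le \sqrt{2}$ holds uniformly (it does, since $\tilde{\vb}$ differs from $\vb$ in at most two coordinates by $\pm 1$ each) and confirming that both lattices genuinely share the ambient space $V$ so that the covering radius is computed over the same set of points. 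I would close by remarking that the word \emph{sharp} in the statement is justified by the two endpoints being attainable in appropriate parity cases, but the inequalities themselves follow from the two elementary arguments above.
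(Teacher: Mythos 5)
Your proposal is correct and follows essentially the same route as the paper: the lower bound via the monotonicity of the covering radius under passing to a sublattice spanning the same space $V$, and the upper bound by composing the nearest-point approximation in $A_{n-1}$ with the rounding map $\vb\mapsto\tilde{\vb}$ of \eqref{tilde} and the triangle inequality. The only cosmetic difference is that the paper additionally observes that $\max_{\vb\in A_{n-1}}\min_{\x\in\Lp}\|\vb-\x\|$ equals $\sqrt{2}$ exactly, which is not needed for the stated bounds.
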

\begin{proof}
Put $V:=\spa_{\R}A_{n-1}=\spa_{\R}\Lp$. By definition and noticing $\Lp \subseteq A_{n-1}$, we have
$$
\mu(\Lp)=\max_{\vb\in V}\min_{\x \in \Lp} \| \vb-\x \|\ge \max_{\vb\in V}\min_{\x \in A_{n-1}} \| \vb-\x \|=\mu(A_{n-1}),
$$
which gives the desired lower bound.

Now we want to get the upper bound. By \cite[Theorem 3.4]{Lenny2014}, we have
$$
\max_{\vb\in A_{n-1}}\min_{\x \in \Lp} \| \vb-\x \|\le \sqrt{2}.
$$
On the other hand, for any $\vb\in A_{n-1}$ and $\x \in \Lp$, since all the entries of $\vb-\x$ are integers and $\vb-\x\in A_{n-1}$, we must have $\| \vb-\x \|\ge \sqrt{2}$ if $\vb \ne \x$. So, we actually have
$$
\max_{\vb\in A_{n-1}}\min_{\x \in \Lp} \| \vb-\x \|= \sqrt{2}.
$$

Given an arbitrary point $\vb\in V$, we can pick a point $\wb\in A_{n-1}$ such that $\|\vb-\wb\|\le \mu(A_{n-1})$; then as \eqref{tilde} we define a point $\tilde{\wb}\in \Lp$ satisfying $\|\wb-\tilde{\wb}\|\le \sqrt{2}$. Thus, we have
$$
\|\vb-\tilde{\wb}\|\le \|\vb-\wb\|+\|\wb-\tilde{\wb}\|\le \mu(A_{n-1})+\sqrt{2},
$$
which implies the desired upper bounds.
\end{proof}

\begin{corollary} \label{upper}
For $n=|\PP|\ge 2$, we have
\begin{equation}
\mu(\Lp)\le \left\{\begin{array}{ll}
 \frac{1}{2}\sqrt{n}+\sqrt{2} & \textrm{if $n$ is even,} \\
 \frac{1}{2}\sqrt{n-1/n}+\sqrt{2} & \textrm{if $n$ is odd.}
\end{array}\right.
\notag
\end{equation}
\end{corollary}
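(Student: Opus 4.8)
The plan is to obtain the corollary as an immediate specialization of Theorem \ref{radius}, using the explicit covering radius of the root lattice $A_{n-1}$ recorded in Lemma \ref{An}. The upper bound in Theorem \ref{radius} asserts that
$$
\mu(\Lp) \le \mu(A_{n-1}) + \sqrt{2}
$$
for every $n \ge 2$, so all that remains is to insert the known value of $\mu(A_{n-1})$ and separate according to the parity of $n$.

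First I would invoke Lemma \ref{An}, which gives $\mu(A_{n-1}) = \frac{1}{2}\sqrt{n}$ when $n$ is even and $\mu(A_{n-1}) = \frac{1}{2}\sqrt{n-1/n}$ when $n$ is odd. Substituting these two expressions into the displayed upper bound and splitting the argument along the two parity cases produces exactly the two branches of the claimed inequality. Since Theorem \ref{radius} already carries the full analytic content — its proof combines the equality $\max_{\vb \in A_{n-1}} \min_{\x \in \Lp}\|\vb - \x\| = \sqrt{2}$ with the deep-hole construction \eqref{tilde} — the corollary needs no further estimation.

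Consequently there is no genuine obstacle here: the derivation is purely a case split on the parity of $n$ followed by the direct substitution of the formula from Lemma \ref{An} into the bound of Theorem \ref{radius}. I would therefore simply record both cases and conclude, with the observation that the sharpness asserted in the section heading comes from the matching lower bound $\mu(A_{n-1}) \le \mu(\Lp)$ already supplied by Theorem \ref{radius}.
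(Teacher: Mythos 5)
Your proposal is correct and matches the paper's approach exactly: the corollary is an immediate consequence of the upper bound $\mu(\Lp)\le \mu(A_{n-1})+\sqrt{2}$ from Theorem \ref{radius} together with the explicit formula for $\mu(A_{n-1})$ in Lemma \ref{An}, split by the parity of $n$. No further argument is needed.
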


It is clear to see that the upper bound in Corollary \ref{upper} is much better than the bound \eqref{upper0} given in \cite{Lenny2014}.

As mentioned before, an improvement of the upper bound in Corollary  \ref{upper} in the case of the Barnes lattices is given in \cite{Bottcher}, that is 
$$
\mu(\Lp)<\frac{1}{2}\sqrt{n+4\log (n-2)+6-4\log 2+10/(n-1)},
$$
where $\PP$ is a cyclic group of order $n\ge 3$; see \cite[Theorem 1.3]{Bottcher}.

Finally, let $V=\spa_{\R}\Lp$, and $\wb\in V$ defined as in Lemma \ref{An}. Then, we have
$$
\min_{\x \in \Lp} \| \wb-\x \| \ge \min_{\x \in A_{n-1}} \| \wb-\x \|=\mu(A_{n-1}).
$$
On the other hand, let $\x_0$ be the zero vector. Since $\|\wb-\x_0\|=\mu(A_{n-1})$ and $\x_0\in \Lp$, we have
$$
\min_{\x \in \Lp} \| \wb-\x \| \le \mu(A_{n-1}).
$$
So, we get 
$$
\min_{\x \in \Lp} \| \wb-\x \| = \mu(A_{n-1}).
$$
In view of Theorem \ref{radius}, we can view $\wb$ as an approximation of a deep hole  with respect to $\Lp$.

\section*{Acknowledgement}
The author would like to thank Prof. Igor E. Shparlinski for introducing him this topic. He wants to thank Prof. Heinz-Georg Quebbemann for sending him a copy of \cite{Quebbemann}. He is also grateful to Prof. Lenny Fukshansky and Prof. Hiren Maharaj for their interest on this paper and  
helpful discussions. He also appreciates Prof. Lenny Fukshansky sending him the recent preprint \cite{Bottcher}. The research of the author was supported by the Australian Research Council Grant DP130100237.
%He is also grateful to Prof. Lenny Fukshansky for reading the early version of this paper,
%helpful discussions, and valuable comments.
Finally, he wants to thank the referee for careful reading and very useful comments.

\end{document}